\newtheorem{thm}{Theorem}[section]
\newtheorem{prop}[thm]{Proposition}
\newtheorem{lem}[thm]{Lemma}
\newtheorem{cor}[thm]{Corollary}
\theoremstyle{definition}
\newtheorem{rmk}[thm]{Remark}
\newtheorem*{mcs}{Modified Assertion of Severi}
\newcommand{\PP}{\ensuremath{\mathbb{P}}}
\newcommand{\h}{\ensuremath{\mathcal}}
\newcommand{\HL}{\ensuremath{\mathcal{H}^\mathcal{L}_}}
\newcommand{\cv}{\color{violet}}
\newcommand{\vni}{\vskip 4pt \noindent}
\title[Existence, reducibility of Hilbert scheme of linearly normal curves]
{Existence and reducibility of the Hilbert \\scheme of smooth and  linearly normal curves in $\mathbb{P}^r$ of relatively high degrees}
\thanks{This work was started when the author was enjoying the hospitality and the stimulating atmosphere of the Max-Planck-Insitut f\"ur Mathematik (Bonn). The author was supported in part by National Research Foundation of South Korea (2019R1I1A1A01058457). He  would like to thank the referees for several valuable comments and suggestions which enhanced the clarity as well as the readability of the paper significantly. He is also grateful to Dawei Chen and Claudio Fontanari for  useful remarks on admissible covers.}
\author[Changho Keem]{Changho Keem}
\address{
Department of Mathematics,
Seoul University\\
Seoul 08826,  
South Korea}
\email{ckeem1@gmail.com or ckeem@snu.ac.kr}
\subjclass{Primary 14C05, Secondary 14H10}
\keywords{Hilbert scheme, algebraic curves, linearly normal, linear series}
\date{\today}
\begin{document}

\begin{abstract}
Let $\mathcal{H}_{d,g,r}$ be the Hilbert scheme parametrizing smooth irreducible and non-degenerate curves of degree $d$ and genus $g$ in $\mathbb{P}^r.$
We denote by $\mathcal{H}^\mathcal{L}_{d,g,r}$ the union of those components of $\mathcal{H}_{d,g,r}$ whose general element is linearly normal. In this article 
we show that $\mathcal{H}^\mathcal{L}_{d,g,r}$ ($d\ge g+r-3$) is non-empty in a certain optimal range of triples $(d,g,r)$ and is empty outside the range.
This settles the existence (or non-emptiness if one prefers) of the Hilbert scheme $\mathcal{H}^\mathcal{L}_{d,g,r}$ 
of linearly normal curves of degree $d$ and genus $g$ in $\PP^r$ for $g+r-3\le d\le g+r$,  $r\ge 3$.
We also determine all the triples $(d,g,r)$ with $g+r-3\le d\le g+r$ for which $\mathcal{H}^\mathcal{L}_{d,g,r}$ is reducible (or irreducible).
\end{abstract}
\maketitle
\section{ An overview, preliminaries and basic set-up}

Given non-negative integers $d$, $g$ and $r\ge 3$, we  denote by $\mathcal{H}_{d,g,r}$ the Hilbert scheme of smooth curves parametrizing smooth irreducible and non-degenerate curves of degree $d$ and genus $g$ in $\PP^r$.
\vni
In two recent works \cite{JPAA, KK3}, the authors studied the Hilbert scheme of {\bf linearly normal} curves in $\PP^r$, $r\ge 3$.  As a preliminary attempt toward a reasonable settlement of  the {\bf Modified Assertion of Severi} -- which was first addressed in {\cv \cite [p. 489]{CS}} and later suggested by A. Lopez in  the review of \cite{Keem} on MathSciNet; AMS Mathematical Reviews MR1221726(95a:14026) -- the main problem treated in \cite{JPAA, KK3} was focused on the irreducibility of the Hilbert scheme of linearly normal curves as follows.
\begin{mcs}  A nonempty $\mathcal{H}^\mathcal{L}_{d,g,r}$ is irreducible for any triple $(d,g,r)$ in the Brill-Noether range  $$\rho (d,g,r)=g-(r+1)(g-d+r)\ge 0,$$ where $\mathcal{H}^\mathcal{L}_{d,g,r}$ is the union of those components of ${\mathcal{H}}_{d,g,r}$ whose general element is linearly normal.
\end{mcs}
\vni
 We briefly make a note of the results in \cite{JPAA} regarding  this business as follows.
\begin{thm}[Ballico et al.] \label{JPA}
\begin{enumerate}
\item[(1)] $\mathcal{H}^\mathcal{L}_{g+r,g,r}$ is non-empty and irreducible.
\item[(2)] $\mathcal{H}^\mathcal{L}_{g+r-1,g,r}$ is  non-empty and irreducible for $g\ge r+1$ and is empty for $g\le r$.
\item[(3)] Every non-empty $\mathcal{H}^\mathcal{L}_{g+r-2,g,r}$ is irreducible.
\item[(4)] 
Every non-empty $\mathcal{H}^\mathcal{L}_{g+r-3,g,r}$ is irreducible for $g\ge 2r+3$
and is non-empty for $g\ge 3r+3$.
\end{enumerate}
\end{thm}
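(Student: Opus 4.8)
The plan is to translate the study of linearly normal curves of degree $d=g+r-k$ (here $k=0,1,2,3$) into the study of their residual series under Serre duality, and then to read off irreducibility and non-emptiness from known irreducibility results for the resulting low-dimensional moduli. The starting point is the numerical identity forced by linear normality: if $C\subset\PP^r$ is embedded by a complete $|L|$ with $\deg L=g+r-k$, then Riemann--Roch gives $h^0(L)=(d-g+1)+h^1(L)=r+1$, so the index of speciality is exactly $h^1(L)=k$. Consequently the residual bundle $M:=K_C-L$ has $\deg M=g-r+k-2$ and $h^0(M)=k$; that is, $M$ is a $g^{k-1}_{g-r+k-2}$, and $L=K_C-M$ recovers the embedding. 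Since very ampleness and completeness of $|L|$ are open conditions and the forgetful map from embedded curves to abstract pairs $(C,L)$ has irreducible $\mathrm{PGL}(r+1)$-fibres, the components of $\mathcal{H}^\mathcal{L}_{d,g,r}$ correspond to the components of the locus of admissible pairs $(C,M)$ fibred over the irreducible moduli space $\mathcal{M}_g$. I would therefore prove all four assertions by counting components of spaces of residual series of dimension $k-1$.

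For $k=0$ and $k=1$ this is essentially immediate. When $k=0$ the residual is a generically non-effective class in $\mathrm{Pic}^{g-r-2}(C)$, equivalently $L$ is a non-special line bundle of degree $g+r$; these sweep out a dense open subset of the degree-$(g+r)$ Picard scheme, irreducible of relative dimension $g$ over $\mathcal{M}_g$, and very ampleness holds generically because $g+r\ge g+3$. Irreducibility of $\mathcal{M}_g$ then yields assertion (1). When $k=1$, $M$ is a single effective divisor of degree $g-r-1$ with $h^0=1$, parametrised by the irreducible image of the symmetric product $C^{(g-r-1)}$; this is non-empty exactly when $g\ge r+1$, while for $g\le r$ the degree $g-r-1$ is negative, the forced equality $h^1(L)=1$ is impossible, and hence $h^0(L)=d-g+1=r$ makes every such curve degenerate. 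This gives both the irreducibility for $g\ge r+1$ and the emptiness for $g\le r$ in assertion (2).

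The point of the residual reduction is that it remains effective even when $\rho(d,g,r)<0$, because low-dimensional series exist on special curves in irreducible families. For $k=2$ the residual $M$ is a pencil $g^1_{g-r}$; after removing any base locus it presents $C$ as a degree-$(g-r)$ cover of $\PP^1$, so the family of pairs $(C,M)$ is dominated by the Hurwitz scheme of such coverings, which is irreducible by the classical theorem of Clebsch and Hurwitz. Cutting out the open locus where the associated $L$ is very ample and complete, this forces every non-empty $\mathcal{H}^\mathcal{L}_{g+r-2,g,r}$ to be irreducible, which is assertion (3); note that no lower bound on $g$ is needed precisely because pencils persist in a single irreducible family regardless of the sign of $\rho$. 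The genuinely delicate case is $k=3$, where $M$ is a net $g^2_{g-r+1}$: a birationally very ample such net maps $C$ to a plane curve of degree $n=g-r+1$ whose geometric genus $g$ forces $\delta=\binom{n-1}{2}-g$ nodes. Existence of a net on a \emph{general} curve is exactly Brill--Noether for $g^2_n$, which demands $\rho=g-3(r+1)\ge 0$, i.e. $g\ge 3r+3$, accounting for the non-emptiness half of (4).

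The hard part is the irreducibility in (4) over the wider range $g\ge 2r+3$, where $\rho$ may be negative and the general curve carries no such net. Here I would appeal to the irreducibility of the Severi variety $V_{n,\delta}$ of reduced irreducible plane curves of degree $n$ with $\delta$ nodes (Severi--Harris): through its residual net, every linearly normal curve of degree $g+r-3$ arises from a single Severi variety, whose irreducibility propagates back up to $\mathcal{H}^\mathcal{L}_{g+r-3,g,r}$. The two technical obstacles I expect to absorb most of the work are (i) showing that the residual net $M$ is base-point-free and birational onto its image, so that the plane model is genuinely nodal of the stated degree rather than a multiple cover or a more singular curve---this is where the bound $g\ge 2r+3$ is used, to rule out base points and non-birational maps by a dimension estimate---and (ii) controlling the monodromy so that, over the special curves carrying nets, the a priori several components of the space of nets coalesce into one. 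Granting the Severi-variety irreducibility together with these base-point-freeness and birationality estimates, all four statements follow.
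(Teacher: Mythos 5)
Your proposal is correct in outline and takes essentially the same route as the paper: this theorem is quoted from \cite{JPAA}, and the method the paper itself recaps (in the proofs of Theorem \ref{g+r-2} and Proposition \ref{empty}(ii)) is precisely your residual-series reduction, with speciality index $k$, the $k=2$ case settled via the irreducible space of pencils $\mathcal{G}^1_{g-r}$ (equivalently, Hurwitz-space irreducibility), the $k=3$ case via Harris's irreducibility of Severi varieties, Iliev-type dimension bounds ruling out base points and compounded residuals exactly when $g\ge 2r+3$, and Brill--Noether theory supplying non-emptiness for $g\ge 3r+3$. The technical obstacles you flag (base-point-freeness and birationality of the residual net, and equality of dimensions so that each component dominates the irreducible parameter space) are indeed the points the cited proofs handle by the dimension estimates of Proposition \ref{wrdbd}.
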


\vni
By the Riemann-Roch formula, there is no complete linear series of degree $d$ and dimension $r$ in case $d\ge g+r+1$. Therefore in this range we have
$$\h{H}^\h{L}_{d,g,r}=\emptyset$$
and the Modified Assertion of Severi makes sense only if $g-d+r\ge 0$.

\vni
We remark that the results (1) and (2) in Theorem \ref{JPA} are  strong, i.e. the range of the genus $g$ with respect to the dimension $r$ of the ambient projective space $\PP^r$ for which $\mathcal{H}^\mathcal{L}_{d,g,r}$ ($d\ge g+r-1$) is non-empty is sharp. On the other hand, 
even though the irreducibility of $\mathcal{H}^\mathcal{L}_{d,g,r}$ holds beyond the Brill-Noether range for $d=g+r-2$, the optimal range of $g$ (with respect to $r$) for which $\mathcal{H}^\mathcal{L}_{g+r-2,g,r}$ is non-empty has not been explicitly obtained in \cite{JPAA}. Instead, the following scattered results have been shown; cf. \cite[Remark 2.4]{JPAA}.
\begin{enumerate}
\item[(1)] $\mathcal{H}^\mathcal{L}_{g+r-2,g,r}=\emptyset$ for $g\le r+2$, $r\ge 3$
\item[(2)] $\mathcal{H}^\mathcal{L}_{g+r-2,g,r}\neq\emptyset$ inside the Brill-Noether range $\rho (g+r-2,g,r)=g-2(r+1)\ge 0$, 
\item[(3)] $\mathcal{H}^\mathcal{L}_{g+r-2,g,r}\neq\emptyset$ for {\bf some values} of $g\ge r+3$ outside the Brill-Noether range.
\end{enumerate}
However for most values of $g$ outside the Brill-Noether range, the existence of a component of $\mathcal{H}^\mathcal{L}_{g+r-2,g,r}$ has been left undetermined.

\vni In this paper  we determine the full range of the genus $g$ for which $\mathcal{H}^\mathcal{L}_{g+r-2,g,r}\neq\emptyset.$ We also get the widest possible (and optimal) range of $g$ for which  $$\mathcal{H}^\mathcal{L}_{g+r-3,g,r}\neq\emptyset.$$ As a by-product, we come up with the complete list of triples $(g+r-3, g,r)$ for which $\mathcal{H}^\mathcal{L}_{g+r-3,g,r}$ is reducible. 

\vni
The organization of this paper is as follows. After we briefly recall some terminologies and preliminaries  in the remainder of this section, we start the next section by showing the existence of $\mathcal{H}^\mathcal{L}_{g+r-2,g,r}$ for every $g\ge r+3$ outside the Brill-Noether range.

\vni
In the subsequent section,  we demonstrate the non-emptiness of  $\mathcal{H}^\mathcal{L}_{g+r-3,g,r}$ {\bf in the  optimal range} beyond the Brill-Noether range. 
We then proceed to determine the irreducibility (or reducibility) of $\mathcal{H}^\mathcal{L}_{g+r-3,g,r}$ for every $g$ for which $\mathcal{H}^\mathcal{L}_{g+r-3,g,r}\neq\emptyset$.  

\vni
For notation and conventions, we almost always follow those in \cite{ACGH} and \cite{ACGH2}; e.g. $\pi (d,r)$ is the maximal possible arithmetic genus of an irreducible and non-degenerate curve of degree $d$ in $\PP^r$.  
Following classical terminology, a linear series of degree $d$ and dimension $r$ on a smooth curve $C$ is usually denoted by $g^r_d$.
A base-point-free linear series $g^r_d$ ($r\ge 2$) on a smooth curve $C$ is called birationally very ample when the morphism 
$C \rightarrow \mathbb{P}^r$ induced by  the $g^r_d$ is generically one-to-one (or birational) onto its image.  A base-point-free linear series $g^r_d$ on $C$  is said to be compounded of an involution (compounded for short) if the morphism induced by the linear series gives rise to a non-trivial covering map $C\rightarrow C'$ of degree $k\ge 2$. 
Throughout we work over the field of complex numbers.

\vni
We recall several fundamental results which are rather well-known; cf. \cite{ACGH2}  or \cite[\S 1 and \S 2]{AC2}.
Let $\mathcal{M}_g$ be the moduli space of smooth curves of genus $g$. Given an isomorphism class $[C] \in \mathcal{M}_g$ corresponding to a smooth irreducible curve $C$, there exist a neighborhood $U\subset \mathcal{M}_g$ of the class $[C]$ and a smooth connected variety $\mathcal{M}$ which is a finite ramified covering $h:\mathcal{M} \to U$, as well as  varieties $\mathcal{C}$, $\mathcal{W}^r_d$ and $\mathcal{G}^r_d$ proper over $\mathcal{M}$ with the following properties:
\begin{enumerate}
\item[(1)] $\xi:\mathcal{C}\to\mathcal{M}$ is a universal curve, i.e. for every $p\in \mathcal{M}$, $\xi^{-1}(p)$ is a smooth curve of genus $g$ whose isomorphism class is $h(p)$,
\item[(2)] $\mathcal{W}^r_d$ parametrizes the pairs $(p,L)$ where $L$ is a line bundle of degree $d$ and $h^0(L) \ge r+1$ on $\xi^{-1}(p)$,
\item[(3)] $\mathcal{G}^r_d$ parametrizes the couples $(p, \mathcal{D})$, where $\mathcal{D}$ is possibly an incomplete linear series of degree $d$ and dimension $r$ on $\xi^{-1}(p)$.
\end{enumerate}

\vni
Let $\widetilde{\mathcal{G}}$ ($\widetilde{\mathcal{G}}_\mathcal{L}$,  respectively) be  the union of components of $\mathcal{G}^{r}_{d}$ whose general element $(p,\mathcal{D})$ corresponds to a very ample (very ample and complete, respectively) linear series $\mathcal{D}$ on the curve $C=\xi^{-1}(p)$. By recalling that an open subset of $\mathcal{H}_{d,g,r}$ consisting of elements corresponding to smooth irreducible and non-degenerate curves is a $\mathbb{P}\textrm{GL}(r+1)$-bundle over an open subset of $\widetilde{\mathcal{G}}$, the irreducibility of $\widetilde{\mathcal{G}}$ guarantees the irreducibility of $\mathcal{H}_{d,g,r}$. Likewise, the irreducibility of $\widetilde{\mathcal{G}}_\mathcal{L}$ ensures the irreducibility of 
 $\mathcal{H}_{d,g,r}^\mathcal{L}$.

\vni
The following facts regarding the schemes $\mathcal{G}^{r}_{d}$ and $\mathcal{W}^r_d$ are also well known; cf. \cite[2.a]{H1} and \cite[Ch. 21, \S 3, 5, 6, 11, 12]{ACGH2}. 

\begin{prop}\label{facts}
For non-negative integers $d$, $g$ and $r$, let $$\rho(d,g,r):=g-(r+1)(g-d+r)$$ be the Brill-Noether number.
	\begin{enumerate}
	\item[\rm{(1)}] The dimension of any component of $\mathcal{G}^{r}_{d}$ is at least $$\lambda(d,g,r):=3g-3+\rho(d,g,r).$$ Moreover, if $\rho(d,g,r)\geq0$, there exists a unique component $\mathcal{G}_0$ of $\widetilde{\mathcal{G}}$ which dominates $\mathcal{M}$(or $\mathcal{M}_g$).
		\item[\rm{(2)}] $\mathcal{G}^{1}_{d}$ is smooth and irreducible of dimension $\lambda(d,g,1)$ if $g>1, d\ge 2$ and $d\le g+1$.
	\end{enumerate}
\end{prop}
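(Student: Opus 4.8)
The plan is to recognize both parts as standard consequences of Brill--Noether theory, built on the determinantal description of $\mathcal{W}^r_d$ and $\mathcal{G}^r_d$ together with the classical existence, connectedness and Hurwitz-irreducibility theorems. For the dimension estimate in (1), I would first exhibit $\mathcal{W}^r_d$ as a degeneracy locus inside the relative Picard scheme $\mathrm{Pic}^d_{\mathcal{C}/\mathcal{M}}$, which is smooth over $\mathcal{M}$ of dimension $(3g-3)+g=4g-3$. Choosing a relatively ample divisor $D$ of large degree and pushing forward the sequence $0\to \mathcal{O}(L)\to \mathcal{O}(L+D)\to \mathcal{O}_D(L+D)\to 0$ produces a map $\phi\colon \mathcal{E}\to\mathcal{F}$ of vector bundles whose locus of corank $\ge r+1$ is exactly $\mathcal{W}^r_d$; a direct count shows the two relevant ranks differ by $d-g+1$, so this degeneracy locus has expected codimension $(r+1)(g-d+r)$. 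Since no component of a determinantal locus can exceed its expected codimension, we obtain
$$\dim\mathcal{W}^r_d\ \ge\ (4g-3)-(r+1)(g-d+r)=3g-3+\rho(d,g,r)=\lambda(d,g,r),$$
and the analogous description of $\mathcal{G}^r_d$ as a degeneracy locus in a Grassmann bundle over $\mathrm{Pic}^d_{\mathcal{C}/\mathcal{M}}$ yields the same lower bound on each of its components.

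For the ``moreover'' part, I would invoke the Kempf--Kleiman--Laksov existence theorem: when $\rho(d,g,r)\ge 0$ every smooth curve carries a $g^r_d$, so $\mathcal{W}^r_d\to\mathcal{M}$ and hence $\mathcal{G}^r_d\to\mathcal{M}$ are surjective and at least one component dominates $\mathcal{M}$. Uniqueness is the delicate step: for $\rho\ge 1$ I would use the Fulton--Lazarsfeld connectedness theorem, which makes the fibre $G^r_d(C)$ of $\mathcal{G}^r_d$ over a single curve connected for every smooth $C$, whereas for $\rho=0$ I would use the transitivity of the Brill--Noether monodromy on the finite fibre over a general curve; in both situations the points of $\mathcal{G}^r_d$ lying over a general curve are confined to a single component $\mathcal{G}_0$. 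Finally, in the range $r\ge 3$ relevant here, a standard general-position argument shows that a general $g^r_d$ on a general curve is very ample, so $\mathcal{G}_0$ is genuinely a component of the very ample locus $\widetilde{\mathcal{G}}$.

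Part (2) I would obtain from the theory of the Hurwitz scheme. A general element of $\mathcal{G}^1_d$ is a base-point-free, simply branched pencil, i.e. a degree $d$ covering $C\to\PP^1$; by Riemann--Hurwitz such a covering has $b=2g+2d-2$ branch points, and recording the branch divisor exhibits the Hurwitz scheme of these coverings as finite \'etale over the configuration space of $b$ points of $\PP^1$, hence smooth of dimension $b$. Dividing by the three-dimensional group $\mathrm{Aut}(\PP^1)$ acting on the target produces a smooth model of $\mathcal{G}^1_d$ of dimension $b-3=2g+2d-5=\lambda(d,g,1)$, valid whether $\mathcal{G}^1_d$ dominates $\mathcal{M}$ (when $\rho\ge0$) or maps finitely onto the $d$-gonal locus (when $\rho<0$); the hypotheses $g>1$, $d\ge 2$, $d\le g+1$ guarantee nonemptiness and that the generic fibre of $\mathcal{G}^1_d\to\mathcal{W}^1_d$ is a single point, so that the dimension is exactly $\lambda(d,g,1)$. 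Irreducibility then follows from the Clebsch--Hurwitz--Fulton theorem that the Hurwitz scheme of connected simply branched covers is irreducible.

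The main obstacle, to my mind, is not the determinantal dimension bound, which is elementary, but the uniqueness of the dominating component in (1): the connectedness and monodromy inputs are considerably deeper, and passing from a single fibre to a single global component needs genuine care in the borderline case $\rho=0$. For (2) the corresponding subtlety is the smoothness of the entire total space $\mathcal{G}^1_d$ rather than mere fibrewise smoothness---note that a fixed curve may well carry a singular $G^1_d(C)$---and I would resolve this through the unobstructedness of deformations of (admissible) covers, which is precisely where the \'etaleness of the branch map does the work.
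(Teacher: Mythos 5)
The paper does not actually prove Proposition \ref{facts}: it is quoted as standard background with pointers to \cite[2.a]{H1} and \cite[Ch.~21]{ACGH2}, and your sketch reconstructs essentially the proofs found in those sources --- the Eagon--Northcott-type lower bound for the degeneracy-locus models of $\mathcal{W}^r_d$ and $\mathcal{G}^r_d$ (your counts are correct: ranks differing by $d-g+1$, expected codimension $(r+1)(g-d+r)$ inside the $(4g-3)$-dimensional relative Picard scheme, and $b-3=2g+2d-5=\lambda(d,g,1)$ on the Hurwitz side), Kempf/Kleiman--Laksov for existence, connectedness plus monodromy for uniqueness, and Clebsch--Hurwitz--Fulton irreducibility for part (2). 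So in substance you are on the same route as the paper's references, and the outline is sound.

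Two steps are under-justified. First, in the uniqueness argument for $\rho\ge1$, connectedness of $G^r_d(C)$ for every smooth $C$ does not by itself confine the fibre over a general curve to a single component: two distinct dominating components could intersect along that connected fibre. The standard completion pairs Fulton--Lazarsfeld with the Gieseker--Petri theorem: for general $C$ the fibre $G^r_d(C)$ is smooth of dimension exactly $\rho$, hence, being connected, irreducible; and since every component of $\mathcal{G}^r_d$ has dimension at least $\lambda(d,g,r)$, the fibre of any dominating component has dimension at least $\rho$ and therefore equals all of $G^r_d(C)$, which forces uniqueness. You should invoke Gieseker--Petri explicitly (it is also what underlies your ``general-position'' claim that the general $g^r_d$ is very ample when $r\ge3$). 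Second, in part (2) the \'etaleness of the branch map gives smoothness only on the open locus of base-point-free, simply branched pencils, whereas the proposition asserts smoothness of all of $\mathcal{G}^1_d$, including pencils with base points and covers with non-simple ramification --- precisely the points absent from your Hurwitz model. Your proposed repair via admissible covers does not address this: admissible covers govern degenerations of source and target curves, not higher ramification over a smooth target. The classical argument (cf. \cite[\S2]{AC2} and \cite[Ch.~21]{ACGH2}) is a direct tangent-space/deformation computation at an arbitrary pencil, carried out under exactly the hypotheses $g>1$, $2\le d\le g+1$. Note that your own inductive count (base loci and non-simple branching sweep out loci of dimension strictly below $\lambda(d,g,1)$) does salvage irreducibility, exact dimension, and \emph{generic} smoothness --- which is all the paper later uses in Theorem \ref{g+r-2} --- but it falls short of the smoothness statement as written.
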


\vni
 Occasionally we will use the following upper bound of the dimension of an irreducible component of $\mathcal{W}^r_d$; cf. \cite{I}. 

\begin{prop}[\rm{\cite[Proposition 2.1]{I}}]\label{wrdbd}
Let $d,g$ and $r\ge 2$ be positive integers such that  $d\le g+r-2$ and let $\mathcal{W}$ be an irreducible component of $\mathcal{W}^{r}_{d}$. For a general element $(p,L)\in \mathcal{W}$, let $b$ be the degree of the base locus of the line bundle $L$ on $C=\xi^{-1}(p)$. Assume further that for a general $(p,L)\in \mathcal{W}$ the curve $C=\xi^{-1}(p)$ is not hyperelliptic. If the moving part of $L$ is
	\begin{itemize}
	\item[\rm{(a)}] very ample and $r\ge3$, then 
	$\dim \mathcal{W}\le 3d+g+1-5r-2b$;
	\item[\rm{(b)}] birationally very ample, then 
	$\dim \mathcal{W}\le 3d+g-1-4r-2b$;
	\item[\rm{(c)}] compounded, then 
	$\dim \mathcal{W}\le 2g-1+d-2r$.
	\end{itemize}
\end{prop}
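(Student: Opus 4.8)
The plan is to reduce the statement to a dimension bound for families of \emph{base-point-free} linear series, and then to bound that via the deformation theory of the associated morphism, splitting into the three geometric types of the moving part.

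First I would peel off the base locus. For a general $(p,L)\in\mathcal{W}$ write $|L|=|A|+B$, where $B$ is the base divisor of degree $b$ and $A$ is the base-point-free moving part, of degree $e:=d-b$ and with $h^0(A)=h^0(L)\ge r+1$. The assignment $(p,L)\mapsto(p,A)$ sends $\mathcal{W}$ into the analogous locus $\mathcal{W}^{\circ}$ of pairs carrying a base-point-free series of degree $e$ and the same type, and the fibre over $(p,A)$ is contained in the choice of $B\in C^{(b)}$, hence has dimension $\le b$. Thus $\dim\mathcal{W}\le\dim\mathcal{W}^{\circ}+b$, and it suffices to prove $\dim\mathcal{W}^{\circ}\le 3e+g+1-5r$, $\;3e+g-1-4r$, $\;2g-1+e-2r$ in the three cases; substituting $e=d-b$ reproduces the stated coefficients of $b$ (the slope $3$ of the first two bounds turns the naive $+b$ into the asserted $-2b$, while the slope $1$ of the compounded bound absorbs $b$ entirely). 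Since the bounds decrease in $r$, I may also assume $h^0(L)=r+1$ on the general element, the case $h^0>r+1$ following from the instance with larger $r$.

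Next I would bound $\dim\mathcal{W}^{\circ}$ through the morphism $f=\phi_A\colon C\to\PP^r$. Up to $\mathbb{P}\mathrm{GL}(r+1)$ the datum $(C,A)$ is the same as the image curve together with its normalization, so $\dim\mathcal{W}^{\circ}=\dim\{f\}-\dim\mathbb{P}\mathrm{GL}(r+1)$, where $\{f\}$ is the family of such morphisms with varying source. By deformation theory $\dim\{f\}\le h^0(C,N_f)=\chi(N_f)+h^1(C,N_f)$, with $N_f$ the normal sheaf of $f$ sitting in $0\to T_C\to f^{*}T_{\PP^r}\to N_f\to 0$; a Riemann--Roch/Euler-sequence computation gives $\chi(N_f)=(r+1)e-(r-3)(g-1)$ in every case. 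Subtracting $\dim\mathbb{P}\mathrm{GL}(r+1)=r^2+2r$ and comparing with the targets shows that each asserted inequality is \emph{equivalent} to a bound on the obstruction space, namely $h^1(N_f)\le(r-2)(u-2)$ in case (a) and $h^1(N_f)\le(r-2)(u-1)$ in case (b), where $u=g-e+r=h^1(\mathcal{O}_C(A))\ge 2$ by the hypothesis $d\le g+r-2$. In case (a) $f$ is an embedding and $N_f=N_{\Gamma}$ is locally free, while in case (b) $f$ is only generically injective and $N_f$ acquires torsion along the singularities of the image; these two features account for the $(r-2)$ gap between the two bounds.

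The main obstacle is precisely this cohomological estimate on $h^1(N_f)$, and here the non-hyperelliptic hypothesis is essential. I would control $h^1(N_f)$ by pushing it through the Euler sequence to the multiplication maps $H^0(\mathcal{O}_C(A))\otimes H^0(K_C-A)\to H^0(K_C)$ and their variants, which govern $h^1(f^{*}T_{\PP^r})$; the uniform position theorem applied to a general hyperplane section $\Gamma\cap H$ (a set of $e$ points of $\PP^{r-1}$ in uniform position, for which non-hyperellipticity guarantees irreducible monodromy) yields the required rank bounds on these maps, and hence the sharp constants $5r$ and $4r$. The compounded case (c) must be treated separately: here $f$ factors as a covering $\pi\colon C\to C'$ of degree $k\ge 2$ followed by a birationally very ample map of $C'$, and I would bound $\dim\mathcal{W}^{\circ}$ by the dimension of the Hurwitz-type family of such covers plus the dimension of the relevant series on $C'$, using the Castelnuovo--Severi inequality (and, for pencils on $C'$, the irreducibility and dimension count of Proposition \ref{facts}(2)). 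I expect securing the \emph{exact} coefficients, rather than merely the correct orders of magnitude, to be the delicate point, since this is exactly where the finer general-position input is unavoidable, the crude Euler-sequence estimate $h^1(N_f)\le(r+1)u$ being too weak for large $u$.
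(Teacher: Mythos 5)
Note first that the paper contains no proof of this proposition at all: it is quoted verbatim from Iliev \cite[Proposition 2.1]{I}, whose argument rests on generic projection plus Harris's dimension theorems for families of plane and space curves. Measured against that, your preliminary reductions are correct and well executed: peeling off the base divisor costs $+b$ while replacing $d$ by $e=d-b$, which is exactly how the slope-$3$ bounds acquire the coefficient $-2b$ and the slope-$1$ bound in (c) loses $b$; and your arithmetic is right that, after bounding the family of maps by $h^0(C,N_f)$ with $\chi(N_f)=(r+1)e-(r-3)(g-1)$, cases (a) and (b) reduce to $h^1(N_f)\le(r-2)(u-2)$ and $h^1(N_f)\le(r-2)(u-1)$, where $u=g-e+r\ge 2$. (Your word ``equivalent'' is too strong, since $\dim\{f\}\le h^0(N_f)$ can be strict, but the implication you need does hold.)

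The genuine gap is that these $h^1$ estimates \emph{are} the proposition, and the tools you propose demonstrably fall short of them. From the pulled-back Euler sequence one gets $h^1(N_f)\le h^1(f^*T_{\PP^r})=(r+1)u-\operatorname{rank}\mu$, with $\mu\colon H^0(A)\otimes H^0(K_C-A)\to H^0(K_C)$ the multiplication map; the Hopf-type bound $\operatorname{rank}\mu\ge (r+1)+u-1$ yields only $h^1\le r(u-1)$, which misses the target in case (b) by $2(u-1)$, and uniform position of a general hyperplane section does not by itself raise $\operatorname{rank}\mu$. Indeed for $r=2$ your case (b) demands $h^1(N_f)=0$ at a general point of an \emph{arbitrary} component — essentially the full Severi-variety dimension theorem — so any attempt to close the gap through the coboundary image of $H^1(T_C)$ in $H^1(f^*T_{\PP^r})$ is a Petri-type statement that is unavailable precisely because the bound must hold for every component of $\mathcal{W}^r_d$, not only those dominating moduli. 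The cited proof avoids the cohomology of $N_f$ altogether: for (b) one projects from $r-2$ general points \emph{on} the curve and invokes the irreducibility and dimension $\lambda(e-r+2,g,2)=3(e-r+2)+g-9$ of the locus of birationally very ample nets (Theorem \ref{severi} here), which gives $3e+g-1-4r$ on the nose; for (a), very ampleness survives generic projection to $\PP^3$ (the secant variety has dimension $3$), and one quotes the sharper bound for families of smooth space curves from \cite{H1}; for (c) one counts directly over the families $\mathcal{X}_{k,h}$ of degree-$k$ covers together with the series on the base curve, which is where the non-hyperelliptic hypothesis and $d\le g+r-2$ enter — a finite case analysis you gesture at but do not carry out. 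In short, your scheme is a clean deformation-theoretic reformulation, but it relocates the entire difficulty into inequalities you do not prove and for which your proposed estimates are provably too weak without a substantial new input.
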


\begin{rmk}\label{principal}
\begin{enumerate}
\item[(i)]
In the Brill-Noether range $\rho (d,g,r)\ge 0$, the unique component $\mathcal{G}_0$ of $\widetilde{\mathcal{G}}$ (and the corresponding component $\mathcal{H}_0$ of $\mathcal{H}_{d,g,r}$ as well) which dominates $\mathcal{M}$ or $\mathcal{M}_g$ is called the  ``principal component".  

\vskip 4pt
\noindent
\item[(ii)] In the range $d\le g+r$ inside the Brill-Noether range $\rho (d,g,r)\ge 0$, the principal component $\mathcal{G}_0$ which has the expected dimension $\lambda (d,g,r)$ is one of the components of $\widetilde{\mathcal{G}}_\mathcal{L}$ (cf. \cite[2.1 page 70]{H1})  and  hence $\widetilde{\mathcal{G}}_\mathcal{L}$ and 
 $\mathcal{H}_{d,g,r}^\mathcal{L}$ are {\bf always non-empty} in the Brill-Noether range with $g-d+r\ge 0$.
\end{enumerate}
\end{rmk}
\vni
Inside the family of plane curves of degree $d$ in $\mathbb{P}^2$ which is parametrized naturally by the projective space $\mathbb{P}^N$ with $N=\frac{d(d+3)}{2}$, let $\Sigma_{d,g}\subset \PP^N$ be the Severi variety of plane curves of degree $d$ and geometric genus $g$. 
Denoting by $\mathcal{G'}\subset \mathcal{G}^{2}_{d}$ the 
union of components whose general element $(p,\mathcal{D})$  is such that $\mathcal{D}$ is birationally very ample  on $C=\xi^{-1}(p)$, we 
remark that an open subset of the Severi variety $\Sigma_{d,g}$ is a $\mathbb{P}\textrm{GL}(3)$-bundle over an open subset of  $\mathcal{G}'$.  Therefore the irreducibility of  $\Sigma_{d,g}$ implies the 
irreducibility of the locus $\mathcal{G'}\subset \mathcal{G}^{2}_{d}$ and vice versa. 
Indeed the following theorem of Harris is fundamental for our study; cf. \cite[Theorem 10.7 and 10.12]{ACGH2} or \cite[Lemma 1.1, 1.3 and 2.3]{H2}.

\begin{thm}\label{severi}
$\Sigma_{d,g}$ is irreducible of dimension $3d+g-1=\lambda(d,g,2)+\dim\mathbb{P}\textrm{GL}(3)$.  Equivalently, 
$\mathcal{G}'\subset \mathcal{G}^{2}_{d}$ is irreducible of dimension $\lambda (d,g,2).$
\end{thm}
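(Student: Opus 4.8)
The plan is to establish the statement for the Severi variety $\Sigma_{d,g}$ itself and then transfer it: since an open subset of $\Sigma_{d,g}$ is a $\mathbb{P}\textrm{GL}(3)$-bundle over an open subset of $\mathcal{G}'$, irreducibility and the dimension count pass immediately to $\mathcal{G}'\subset\mathcal{G}^2_d$, the bookkeeping being $\dim\mathbb{P}\textrm{GL}(3)=8$ and $3d+g-1=\lambda(d,g,2)+8$. Throughout write $\delta=\binom{d-1}{2}-g$ for the number of nodes of a general member, so that $\Sigma_{d,g}$ is the locus of irreducible plane curves of degree $d$ with exactly $\delta$ nodes inside $\PP^N$, $N=\tfrac{d(d+3)}{2}$.

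First I would settle the dimension and the local structure. At a point $[C]\in\Sigma_{d,g}$ corresponding to an irreducible $\delta$-nodal curve with node scheme $Z$, the Zariski tangent space is identified with $\PP H^0(\PP^2,\mathcal{I}_Z(d))$, and the expected codimension is $\delta$, one condition per node. The key input is the deformation-theoretic fact that equisingular (node-preserving) deformations of a plane nodal curve are unobstructed, so that $\Sigma_{d,g}$ is smooth of the expected dimension $N-\delta=3d+g-1$ at every such point. In particular every irreducible component of $\Sigma_{d,g}$ is also a connected component, and the whole problem is reduced to proving that $\Sigma_{d,g}$ is connected.

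For connectedness I would argue, following Harris, by induction on $d$ via degeneration to the boundary. The base case $\delta=0$, where $\Sigma_{d,g}=\PP^N$ minus the discriminant is manifestly irreducible, and small values of $d$ are direct. For the inductive step I would study the closure $\overline{\Sigma}_{d,g}$ near a split curve $C_0\cup L$, where $L$ is a general line and $C_0$ is a general member of degree $d-1$ in a component of the corresponding lower Severi variety, meeting $L$ transversally in $d-1$ points; such a curve carries the nodes of $C_0$ together with the $d-1$ points of $C_0\cap L$. By the inductive hypothesis the lower-degree factor contributes a single family. The analytic branches of $\Sigma_{d,g}$ through this boundary point correspond to the choices of which of the $d-1$ transverse intersection points are retained as nodes and which are smoothed; and as $L$ varies, the monodromy permutes these $d-1$ points, hence the corresponding branches, by the full symmetric group. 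This transitivity is what welds the a priori distinct components of $\Sigma_{d,g}$ into one.

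The main obstacle is precisely this local analysis at the degenerate boundary: one must show that the completed local ring of $\overline{\Sigma}_{d,g}$ at $[C_0\cup L]$ is a reduced, normal-crossing-type configuration of smooth branches indexed by the retained intersection points, so that the branch count matches the orbit count and transitive monodromy genuinely forces a single component. Controlling this requires a careful study of the versal deformation of the $d-1$ singularities where $L$ crosses $C_0$, which are exactly the loci where smoothing and node-preservation compete, and of how the equisingular stratum of the total curve sits inside that deformation space. Once this normal-form statement and the transitivity of the monodromy are in hand, the smoothness established in the first step upgrades connectedness to irreducibility, and the formula $\dim\Sigma_{d,g}=3d+g-1=\lambda(d,g,2)+\dim\mathbb{P}\textrm{GL}(3)$ follows from the tangent space computation.
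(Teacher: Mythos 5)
You should first be aware that the paper offers no proof of Theorem \ref{severi} at all: it is Harris's solution of the Severi problem, quoted with references to \cite[Lemma 1.1, 1.3 and 2.3]{H2} and \cite[Theorem 10.7 and 10.12]{ACGH2}, the only content supplied by the paper itself being the transfer between $\Sigma_{d,g}$ and $\mathcal{G}'$ via the $\mathbb{P}\textrm{GL}(3)$-bundle structure, which your first paragraph reproduces correctly (as does your arithmetic: $N-\delta=3d+g-1=\lambda(d,g,2)+8$). So the benchmark is the cited proof of Harris, and your sketch does reproduce its architecture: smoothness of the equisingular locus in its expected dimension, reduction of irreducibility to connectedness, degeneration to split curves $C_0\cup L$, a branch analysis at the boundary, and symmetric-group monodromy on the $d-1$ points of $C_0\cap L$. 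Two smaller points should still be flagged: smoothness of $\Sigma_{d,g}$ of dimension exactly $N-\delta$ at a $\delta$-nodal curve requires that the nodes impose independent conditions on curves of degree $d$ (equivalently $H^1(\mathbb{P}^2,\mathcal{I}_Z(d))=0$), which ``unobstructedness'' alone does not give and which must be proved; and the local normal-crossing description of $\overline{\Sigma}_{d,g}$ at $[C_0\cup L]$ that you identify as the main obstacle is in fact the standard, well-understood part (independent smoothing of nodes in the versal deformation), once one knows the boundary point lies in the closure.

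That last clause is where the genuine gap sits: nowhere do you show that an \emph{arbitrary} irreducible component $W$ of $\Sigma_{d,g}$ contains in its closure a curve of the form $C_0\cup L$ with $C_0$ an irreducible nodal curve of degree $d-1$ meeting a line $L$ transversally, with reduced nodal limit and with $C_0$ generic in a lower Severi variety. Your inductive step simply ``studies the closure near a split curve,'' and your monodromy argument welds only the branches through such a boundary point; both are vacuous for a hypothetical component whose closure misses this boundary stratum, so connectedness does not follow. Producing these degenerations for every component --- imposing $d+1$ general points on a line to force $L$ into the limit, showing each point condition drops the dimension by exactly one, and excluding bad limits (several lines splitting off, non-reduced or non-nodal limits, nodes colliding with $C_0\cap L$), together with the genus bookkeeping that determines how many of the $d-1$ intersection nodes are retained --- is precisely the content of the lemmas of \cite{H2} that the paper cites, occupies the bulk of the published proof, and is exactly the point at which Severi's original argument failed. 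As written, your proposal assumes the hard half of the theorem and proves the easy half.
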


\vni Our main approach will be to look at the residual linear series of a very ample and complete linear series $g^r_d$ and leverage results on the irreducibility of the Hurwitz spaces and Severi varieties.  As a matter of fact, the cases accessible by this strategy are precisely the cases $d\ge g+r-3$, which is why these cases are the focus of the present paper. 

\vni

\vni As was pointed out by the referee, the irreducibility of Hurwitz spaces and Severi varieties is the only essential point at which the characteristic zero
assumption enters the proof.
In this regard, it is worthwhile to let the readers know that there is a preprint by Christ, He and Tyomkin claiming  the irreducibility of Severi varieties in positive characteristic; cf. \cite{positiveseveri}. See also an interesting  related YouTube video at https://youtu.be/4HAOitlDeKw.

\section{Existence of $\mathcal{H}^\mathcal{L}_{g+r-2,g,r}$}

In this section we prove the following theorem which provides the full range of the genus $g$ for which $\mathcal{H}^\mathcal{L}_{g+r-2,g,r}\neq\emptyset$ and irreducible.

\begin{thm}  \label{g+r-2}\begin{enumerate}
\item[(i)]$\mathcal{H}^\mathcal{L}_{g+r-2,g,r}$ is non-empty, irreducible, generically smooth of the expected dimension $$3g-3+\rho (g+r-2,g,r)+\dim\PP\textrm{GL}(r+1)$$  and has the expected number of moduli for every $g\ge r+3$. 
\item[(ii)] $\mathcal{H}^\mathcal{L}_{g+r-2,g,r}=\emptyset$
if $g\le r+2$.
\end{enumerate}
\end{thm}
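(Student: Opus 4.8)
The plan is to treat the two parts separately, with Clifford's theorem settling the vanishing statement~(ii) and a residuation argument settling the existence and irreducibility statement~(i).

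For part (ii) I would first observe that the invariants force any linearly normal $g^r_{g+r-2}$ to be special. Indeed, if $C\subset \PP^r$ is a smooth non-degenerate linearly normal curve of degree $d=g+r-2$ with hyperplane bundle $L$, then $h^0(L)=r+1$ and Riemann--Roch gives $h^1(L)=2$, so $L$ is effective and special with $0<\deg L=g+r-2<2g-2$ (the last inequality being $r<g$). Clifford's theorem then yields $r=h^0(L)-1\le \tfrac12\deg L=\tfrac12(g+r-2)$, i.e. $g\ge r+2$; hence for $g\le r+1$ no such $L$ exists and $\mathcal{H}^\mathcal{L}_{g+r-2,g,r}=\emptyset$. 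For the boundary value $g=r+2$ we have equality in Clifford, and the classification of the equality case forces $C$ to be hyperelliptic with $L\cong r\cdot g^1_2$; such an $L$ is compounded with the hyperelliptic involution and therefore fails to be very ample, so again no smooth linearly normal curve occurs. This disposes of (ii).

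For part (i) the key is residuation. Since $d=g+r-2$ we have $\deg(K-L)=g-r$ and $h^0(K-L)=h^1(L)=2$, so the residual series of a complete $g^r_{g+r-2}$ is a complete pencil $g^1_{g-r}$, and conversely $D\mapsto K-D$ sends a complete $g^1_{g-r}$ back to a complete $g^r_{g+r-2}$ (one checks $h^0(K-D)=r+1$ by Riemann--Roch). Thus residuation identifies the locus $Z\subset \mathcal{G}^r_{g+r-2}$ of complete series with the space of complete pencils of degree $g-r$. For $g\ge r+3$ we have $2\le g-r\le g+1$, so Proposition~\ref{facts}(2) applies and $\mathcal{G}^1_{g-r}$ is smooth and irreducible of dimension $\lambda(g-r,g,1)$. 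A direct computation gives $\rho(g-r,g,1)=g-2r-2=\rho(g+r-2,g,r)$, hence $\lambda(g-r,g,1)=\lambda(g+r-2,g,r)=4g-2r-5$, and transporting this through the residuation isomorphism shows $Z$ is smooth and irreducible of dimension $\lambda(g+r-2,g,r)$. The heart of the matter---and the step I expect to be the main obstacle---is to show that the general member of $Z$ is very ample. Writing $L=K-D$, very ampleness is equivalent to $h^0(L-p-q)=r-1$ for every length-two divisor $p+q$, which by Riemann--Roch and $K-L=D$ translates into the condition $h^0(D+p+q)=2$ for all $p,q\in C$. Failure of this condition produces on $C$ a net (a $g^2$, after removing base points) of degree at most $g-r+2$ containing the pencil $|D|$. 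The plan is to bound the dimension of the locus of $(C,D)$ carrying such an auxiliary net and to show it is a proper closed subset of $\mathcal{G}^1_{g-r}$: one stratifies according to whether the moving part of the offending series is compounded or birationally very ample and applies the dimension estimates of Proposition~\ref{wrdbd} (together with the fact that a general $(g-r)$-gonal curve carries no pencil of degree below $g-r$) to see that none of these strata dominates $\mathcal{G}^1_{g-r}$. Exhibiting a single smooth linearly normal curve with these invariants---equivalently one very ample $K-D$---then guarantees that the very ample locus $Z^{\circ}\subset Z$ is a nonempty open, hence dense and irreducible, subset. The delicate point is precisely this dimension bookkeeping for the loci where the residual series degenerates, since, as the numerics show, the naive plane-curve genus bound does not by itself exclude an auxiliary $g^2$ for all pairs $(k,r)$ with $k=g-r$.

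Finally I would assemble the conclusion. Because every component of $\widetilde{\mathcal{G}}_\mathcal{L}$ has, by definition, a complete general member, such a component must meet $Z$; since $Z^{\circ}$ is irreducible and dense in the irreducible $Z$, it follows that $\widetilde{\mathcal{G}}_\mathcal{L}$ is irreducible and equals the closure of $Z^{\circ}$. The $\PP\textrm{GL}(r+1)$-bundle relation between $\widetilde{\mathcal{G}}_\mathcal{L}$ and $\mathcal{H}^\mathcal{L}_{g+r-2,g,r}$ then gives that the latter is non-empty, irreducible and, by the smoothness of $\mathcal{G}^1_{g-r}$ transported to $Z$, generically smooth of dimension $\lambda(g+r-2,g,r)+\dim\PP\textrm{GL}(r+1)=3g-3+\rho(g+r-2,g,r)+\dim\PP\textrm{GL}(r+1)$. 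For the number of moduli, the map $\widetilde{\mathcal{G}}_\mathcal{L}\to\mathcal{M}_g$ factors through the residuation and lands in the $(g-r)$-gonal locus $\mathcal{M}^1_{g,\,g-r}$, whose dimension is $\min\{3g-3,\,2g+2(g-r)-5\}=\min\{3g-3,\,3g-3+\rho(g+r-2,g,r)\}$; this is exactly the expected number of moduli, completing~(i).
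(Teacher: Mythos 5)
Your skeleton is the same as the paper's: part (ii) via speciality (the paper simply cites the Castelnuovo bound through \cite{JPAA}, and your Clifford argument is an acceptable substitute, modulo the small point that $r<g$ is not given when $g\le r+2$ but must be deduced from $h^1(L)=2$ — if $\deg L\ge 2g-1$ then $h^1(L)=0$, and $\deg L=2g-2$ would make $K-L$ a degree-zero bundle with two sections), and part (i) via residuating complete $g^r_{g+r-2}$'s to complete pencils $g^1_{g-r}$, then transporting irreducibility, generic smoothness, dimension and moduli count from $\mathcal{G}^1_{g-r}$ using Proposition \ref{facts}(2), exactly as the paper does. The problem is that the crux of (i) — showing $\dim|g^1_{g-r}+u+v|=1$ for every $u+v\in C_2$ at a general point of $\mathcal{G}^1_{g-r}$, so that $|K_C-g^1_{g-r}|$ is very ample — is precisely the step you leave as an unexecuted ``plan,'' and your proposed route does not obviously close. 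The dimension bookkeeping via Proposition \ref{wrdbd} is genuinely delicate: for the compounded stratum of nets of degree $g-r+2$ one only gets $\dim\mathcal{W}\le 2g-1+(g-r+2)-4=3g-r-3$, and after adding the parameters for the choice of $u+v$ and for recovering the pencil inside the net, the resulting bound beats $\dim\mathcal{G}^1_{g-r}=4g-2r-5$ only when $g>r+5$, so the cases $g=r+3,r+4,r+5$ are not disposed of by this count. Your fallback — ``exhibiting a single smooth linearly normal curve with these invariants'' — is circular, since such an exhibition is exactly the existence assertion being proved.

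The ingredient you are missing, and which the paper uses to settle this in one stroke, is the theorem on the Clifford index of general gonal curves (\cite{B}, \cite{KKS}): a general $(e+2)$-gonal curve of genus $g\ge 2e+2$, $e\neq 0$, has Clifford index $e$ computed \emph{only} by the unique gonality pencil; in particular it carries no $g^s_{2s+e}$ with $s\ge 2$ and $2s+e\le g-1$. Taking $e=g-r-2$ (so $e\ge 1$ is $g\ge r+3$ and $g\ge 2e+2$ is $g\le 2r+2$), a failure of very ampleness of $|K_C-g^1_{g-r}|$ would give $\dim|g^1_{g-r}+u+v|\ge 2$, i.e.\ a $g^2_{e+4}=g^2_{2\cdot 2+e}$, contradicting the theorem; since $\rho(g-r,g,1)<0$ in this range, $\mathcal{G}^1_{g-r}$ dominates $\mathcal{M}^1_{g,g-r}$ and its general member lies on such a general gonal curve, so the argument closes. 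Note also that the paper splits the range: inside the Brill--Noether range ($g\ge 2r+2$) non-emptiness is immediate from the principal component (Remark \ref{principal}(ii)), and the Clifford-index argument is applied only outside it, which is exactly where its hypothesis $g\le 2r+2$ holds; your single uniform argument would need this case division as well. Without this (or an equivalent) input, your proof is incomplete at what you yourself flag as its central point.
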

\begin{proof} (i) The irreducibility of a non-empty $\mathcal{H}^\mathcal{L}_{g+r-2,g,r}$ was  shown already in \cite[Theorem 2.3]{JPAA} and the existence in the Brill-Noether range $\rho (g+r-2,g,r)=g-2(r+1)\ge 0$ is assured by Remark \ref{principal} (ii). Therefore we work in the range 
$g\le 2r+1$ which is outside the Brill-Noether range.

\vni
We briefly recall the procedure in \cite[Theorem 2.3]{JPAA} showing the irreducibility of $\widetilde{\mathcal{G}}_\mathcal{L}$  (and hence the irreducibility of $\h{H}^\h{L}_{g+r-2,g,r}$) under the assumption that there exists of a component  ${\mathcal{G}}\subset \widetilde{\mathcal{G}}_\mathcal{L}\subset \widetilde{\mathcal{G}}\subset\mathcal{G}^{r}_{g+r-2}$.
The dimension estimate  $$\dim\h{G}=\lambda (g+r-2,g,r)=4g-2r-5$$ is carried out by the inequalities in 
Proposition  \ref{wrdbd} (a) and Proposition \ref{facts} (1). 
Let $\mathcal{W}\subset \mathcal{W}^{r}_{g+r-2}$ be the component containing the image of the natural rational map 
${\mathcal{G}}\overset{\iota}{\dashrightarrow} \mathcal{W}^{r}_{g+r-2}$ with $\iota (\mathcal{D})=|\mathcal{D}|$.
Denoting by 
$\mathcal{W}^\vee\subset \mathcal{W}^{1}_{g-r}$ the locus consisting  of the residual  series (with respect to the canonical series on the corresponding curve) 
of those elements in $\mathcal{W}$, i.e. $$\mathcal{W}^\vee =\{(p, \omega_C\otimes L^{-1}): (p, L)\in\mathcal{W}\},$$ we let $\widetilde{\mathcal{W}}^\vee\subset\mathcal{W}^1_{g-r}$ be the union of those  components $\mathcal{W}^\vee$ corresponding to  $\mathcal{W}$ arising from each component ${\mathcal{G}}\subset \widetilde{\mathcal{G}}_\mathcal{L}$.
By the above dimension estimate, we have
\begin{equation*}\label{scriptpencil}\dim \mathcal{W}^{\vee}=\dim\mathcal{W}=\dim\mathcal{G}=4g-2r-5=\lambda (g-r,g,1)=\dim \mathcal{G}^1_{g-r}.\end{equation*}
It was shown eventually that  $\widetilde{\mathcal{W}}^{\vee}$ is birationally equivalent to  the irreducible locus $\mathcal{G}^1_{g-r}$ (cf. Proposition \ref{facts}(2)), hence $\widetilde{\mathcal{W}}^{\vee}$ is irreducible and so is $\widetilde{\mathcal{G}}_\mathcal{L}$. 

\vni
In order to demonstrate the existence of a component ${\mathcal{G}}\subset \widetilde{\mathcal{G}}_\mathcal{L}\subset \widetilde{\mathcal{G}}\subset\mathcal{G}^{r}_{g+r-2}$, it is enough to show that for a general $(p, g^1_{g-r})\in\h{G}^1_{g-r}$ -- which is non-empty (cf. \cite[Proposition 6.8, p.811]{ACGH2}) -- the residual 
series $\h{D}=|K_C-g^1_{g-r}|$ is a very ample (and complete) $g^r_{g+r-2}$ on $C=\xi^{-1}(p)$. 
Note that $\h{D}=|K_C-g^1_{g-r}|$ is very ample if and only if $$\dim|g^1_{g-r}+u+v|=\dim|g^1_{g-r}|=1$$ for every $u+v\in C_2$. Since $$\rho(g+r-2,g,r)=\rho(g-r,g,1)=g-2(r+1)<0,$$  $\h{G}^1_{g-r}$ dominates the proper irreducible closed sub-locus $\h{M}^1_{g,g-r}\subset\h{M}_g$ generically consisting of general $(g-r)$-gonal curves with a unique base-point-free and complete pencil $g^1_{g-r}$.  Recall that the Clifford index $e$ of  a general $(e+2)$-gonal curve of genus $g\ge 2e+2$ is computed only by the unique pencil computing the gonality as long as $e\neq 0$, i.e. there does not exist a $g^s_{2s+e}$ with $2s+e\le g-1$, $s\ge 2$; cf. \cite[Theorem]{B} or 
\cite[Corollary 1]{KKS}. 
Taking $e+2=g-r$, $\deg|g^1_{g-r}+u+v|=g-r+2=e+4$ and hence $\dim|g^1_{g-r}+u+v|=1$ for any $u+v\in C_2$. 

\vni Finally one may argue that $\mathcal{H}^\mathcal{L}_{g+r-2,g,r}$ is generically smooth as follows. Note that a component $\mathcal{H}$  of the Hilbert scheme $\mathcal{H}^\h{L}_{d,g,r}$ is generically smooth  if and only if the the corresponding component ${\mathcal{G}}\subset \widetilde{\mathcal{G}}_\mathcal{L}\subset \widetilde{\mathcal{G}}\subset\mathcal{G}^{r}_{d}$ is generically smooth. Since we have a sequence of birational maps 
$$\mathcal{G} = \widetilde{\mathcal{G}_{\h{L}}}\dashrightarrow\mathcal{W}\dashrightarrow{\mathcal{W}}^\vee\overset{\kappa}{\dashrightarrow}\mathcal{G}^1_{g-r}  ,$$
$\mathcal{G}$ is birationally equivalent to $\mathcal{G}^1_{g-r}$ which is generically smooth by Proposition \ref{facts} (2); cf. \cite[Proosition 6.8, p.811]{ACGH2} or \cite[Proposition 2.7]{AC2}. Therefore
$\mathcal{H}^\mathcal{L}_{g+r-2,g,r}$ is generically smooth. 

\vni
We
recall that a component $\h{H}\subset \h{H}_{d,g,r}$ is said to have the expected number of moduli if the image of natural functorial map $\h{H}\stackrel{\pi}{\dasharrow}\h{M}_g$ has dimension $$\min(3g-3, 3g-3+\rho(d,g,r)).$$

\vni
In our current situation, the image of the irreducible $\mathcal{H}^\mathcal{L}_{g+r-2,g,r}$ under the functorial map $\pi$ is $\h{M}^1_{g,g-r}$ which has dimension 
$$3g-3+\rho(g-r,g,1)=3g-3+\rho(g+r-2,g,r)=\dim\h{G}.$$
\vni
The statement (ii) is the just the repetition of the statement  \cite[Remark 2.4 (i)]{JPAA} which follows easily from Castelnuovo genus bound. 
\end{proof} 

\begin{rmk}
\begin{enumerate}
\item[(i)] For some particular genus $g$ such as $g=2r+1$ or $g=2r$ which is just below the Brill-Noether range, one may show the 
existence of $\mathcal{H}^\mathcal{L}_{g+r-2,g,r}$ utilizing the existence of a certain reducible and smoothable curve $C_0\subset \PP^r$ of degree $d=g+r-2$ and arithmetic genus $g$; cf. \cite[Corollary 1.3, Remark 1.5]{BE}. 

\item[(ii)] For $r=3$ and  $d=g+r-2=g+1$, it is known that $$\mathcal{H}^\mathcal{L}_{g+1,g,3}=\mathcal{H}_{g+1,g,3}\neq\emptyset$$ and is irreducible of the expected dimension $4(g+1)$ for $g\ge 6$, whereas  $\mathcal{H}_{g+1,g,3}=\emptyset$ for $g\le 5$; cf. \cite[Proposition 2.1]{KKL} and references therein.

\item[(iii)] For $r=4$ and $d=g+r-2=g+2$, it is also known that $$\mathcal{H}^\mathcal{L}_{g+2,g,4}=\mathcal{H}_{g+2,g,4}\neq\emptyset$$  and is  irreducible for $g\ge 7$ and is empty for $g\le 6$; cf. \cite[Corollary 2.2]{KKy2}. 

\item[(iv)] For $r=5$, it is not clear at all if $\mathcal{H}^\mathcal{L}_{g+3,g,5}$ (which we know its irreducibility for $g\ge 8$ by Theorem \ref{g+r-2}) is the only component of $\mathcal{H}_{g+3,g,5}$.
As far as the author knows,  the irreducibility of $\mathcal{H}_{d,g,5}$ even for $d\ge g+5$ has not been completely settled  yet.

\item[(v)] It is worthwhile to remark that a component $\h{H}\subset\HL{d,g,r}$ which has the expected dimension does not necessarily have the expected number of moduli or  vice versa; cf.  Theorem \ref{reducible},  Table 2 and Remark \ref{triple}.
\end{enumerate}
\end{rmk}

\section{Existence  of $\h{H}^\h{L}_{g+r-3,g,r}$ and irreducibility}

In this section we turn to the next case $d=g+r-3$ and extend the existence of the Hilbert scheme $\mathcal{H}^\mathcal{L}_{g+r-3,g,r}$ of linearly normal curves beyond the Brill-Noether range. We show that there exists an explicit range $\Upsilon\subset \mathbb{N}^{\oplus 3}$ -- which will be specified later at the end of the section -- such that 
$$\HL{d,g,r}\neq\emptyset \textrm{ if and only if } (d,g,r)=(g+r-3,g,r)\in\Upsilon\subset \mathbb{N}^{\oplus 3}.$$ We also determine the reducibility (or irreducibility)  of  $\mathcal{H}^\mathcal{L}_{g+r-3,g,r}$ for every $(g+r-3,g,r)\in\Upsilon$ especially  in the range $$r+5\le g\le 2r+2$$ whereas every non-empty $\mathcal{H}^\mathcal{L}_{g+r-3,g,r}$ is known to be irreducible for $g\ge 2r+3$; cf. Theorem \ref{JPA} or \cite[Theorem 2.5]{JPAA}. 
In several places in the proofs - without explicit mention - we use the basic fact that hyperelliptic curves do not carry linear series  which are both special and very ample.  
We now collect several  relevant known results regarding the Hilbert scheme of smooth curves in $\PP^3$ and $\PP^4$.

\begin{rmk} \label{r=34}
\begin{enumerate}
\item[(i)] Somewhat stronger results hold for curves in $\PP^3$. For $r=3$ and $d=g+r-3=g$,  by \cite[Theorem 2.1]{KKy1}  
$$\h{H}_{g,g,3}= \h{H}^\h{L}_{g,g,3} ~\text{is irreducible for} ~g\ge 8$$ and is empty for $g\le 7$. Even though the existence of $\h{H}_{g,g,3}$ for $g\ge 8$ was not explicitly  mentioned in \cite{KKy1}, the existence of a smooth irreducible curve in $\PP^3$ of degree $d=g$ for $g\ge 9$ is assured by a result due to Gruson-Peskine \cite{Gruson}; note that $g\le \pi_1(g,3)$ if $g\ge 9$, where $\pi_1(d,r)$ is so-called the second Castelnuovo genus bound for curves of degree $d$ in $\PP^3$; cf. \cite[page 97]{H1} or \cite[page 123]{ACGH}.  For $g=8$, there are smooth curves of type $(3,5)$ on a smooth quadric surface which form an irreducible family of the expected dimension. 

\item[(ii)] For $r=4$ and $d=g+r-3=g+1$, the irreducibility of $\mathcal{H}_{g+1,g,4}$ has not been fully determined  yet to the best knowledge of the author. It is only known that a non-empty $\mathcal{H}_{g+1,g,4}^\mathcal{L}$ is irreducible unless $g=9$ by \cite[Theorem 2.1]{KK3}.  Regarding the existence of $\mathcal{H}_{g+1,g,4}^\mathcal{L}$ and the irreducibility of $\mathcal{H}_{g+1,g,4}$ as well -- especially outside the Brill-Noether range  $\rho (g+1,g,4)=g-15<0$ -- the followings are known; cf. \cite[Theorem 2.2]{KK3}. 

\noindent
\vni
(a) $\mathcal{H}_{g+1,g,4}=\mathcal{H}_{g+1,g,4}^\mathcal{L}=\emptyset$ for $g\leq8$.		

\vni
(b) For $g=9$, $\mathcal{H}_{10,9,4}=\mathcal{H}_{10,9,4}^\mathcal{L}\neq\emptyset$ and is reducible with two components of dimensions $42$ and $43$.

\vni
(c) For $g=10$,  $\mathcal{H}_{11,10,4}=\mathcal{H}_{11,10,4}^\mathcal{L}\neq\emptyset$ and is irreducible of the expected dimension $46$.

\vni
(d) For $g=12$, $\mathcal{H}_{13,12,4}\neq\emptyset$ and is reducible with two components of the same expected dimension $54$, whereas 
		$\mathcal{H}_{13,12,4}^\mathcal{L}$ is irreducible; note that in this case 
		$\mathcal{H}_{13,12,4}^\mathcal{L}\neq\mathcal{H}_{13,12,4}$.
		
\vni
(e) For the other low genus $g=11, 13, 14$ outside the Brill-Noether range,
$$\h{H}_{g+1,g,4}=\h{H}^\h{L}_{g+1,g,4}\neq\emptyset$$ and is irreducible, generically smooth  of the expected dimension by \cite[Theorem 2.7]{JAA}, whose existence is guaranteed by \cite[Theorem 0.1]{rathman}.
\end{enumerate}
\end{rmk}

\vni
Having collected all the relevant results concerning the existence and the irreducibility of $\HL{g+r-3,g,r}$ for $r=3,4$ in the above Remark \ref{r=34}, we now shift our attention to curves in $\PP^r$ with $r\ge 5$, which we always assume in this section unless otherwise specified.
By the Castelnuovo genus bound and \cite[Theorem 2.5 and Remark 2.6]{JPAA}, 
we know the following: 

\vni
\begin{itemize}
\item{}$\h{H}^\h{L}_{g+r-3,g,r}=\h{H}_{g+r-3,g,r}=\emptyset$ for $g\le r+4$.

\item{}Every non-empty $\h{H}^\h{L}_{g+r-3,g,r}$ is irreducible for $g\ge 2r+3$.
\end{itemize}

\vni
For low genus $g=r+5$ which is just above the empty range $g\le r+4$,  $$\h{H}^\h{L}_{g+r-3,g,r}= \h{H}_{g+r-3,g,r}\neq\emptyset$$ consisting of extremal curves (if $r\ge 4$) as one expects. On the other hand,  the irreducibility of the Hilbert scheme of extremal curves $$\h{H}^\h{L}_{g+r-3,g,r}=\h{H}^\h{L}_{2r+2,r+5,r}=\h{H}_{2r+2,r+5,r}$$ is known to be dependent on the dimension $r$ of the ambient projective space $\PP^r$. Note that for a general $\h{D}\in\h{G}\subset\h{G}^r_{2r+2}$, the residual series $\h{E}=\h{D}^\vee=g^2_6$ is compounded  and $\h{E}=2g^1_3$ if $r\ge 6$. For $r=5$, either $\h{E}=2g^1_3$ or $\h{E}$ is very ample.
We  quote this as the following remark.

\vni
\begin{rmk} \label{extremal}
\begin{enumerate}
\item[(i)] For $r\ge 6$, $\h{H}^\h{L}_{2r+2,r+5,r}=\h{H}_{2r+2,r+5,r}\neq\emptyset$ and is irreducible. 

\item[(ii)]
For $r=5$, $\h{H}^\h{L}_{2r+2,r+5,r}=\mathcal{H}_{2r+2,r+5,r}\neq\emptyset$ and is reducible with two components; one consisting of the images of smooth plane sextics under the $2$-tuple embedding onto a Veronese surface and the other one consisting of trigonal curves on a rational normal scroll. 
\end{enumerate}
\end{rmk}
\vni
In the next case $g=r+6$, we are confronted  with the following seemingly unexpected phenomenon.
Indeed what we shall see next is that there exist triples $(g+r-3,g,r)$ for which $$\h{H}_{g+r-3, g,r}=\emptyset$$
beyond the obvious empty range $g\le r+4$.  The following is the first such kind the author knows of. However, there are further  examples of this kind (and even more) 
if the degree and genus of curves becomes relatively lower with respect to the dimension of the ambient projective space.

\begin{prop} \label{empty}
\begin{enumerate}
\item[(i)]$\h{H}_{g+r-3, g,r}=\emptyset$
for $g=r+6$ and $r\ge 10$. 
\item[(ii)] For $g=r+6$ and $5\le r\le 9$, $$\h{H}^\h{L}_{g+r-3, g,r}=\h{H}_{g+r-3, g,r}\neq\emptyset$$ which is irreducible of the expected dimension 
with the expected number of moduli.

\end{enumerate}
\end{prop}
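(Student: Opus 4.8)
The plan is to exploit the residuation $L\mapsto K_C-L$ announced in the introduction. If $C\subset\PP^r$ is a linearly normal curve of degree $d=g+r-3=2r+3$ and genus $g=r+6$, then $L=\mathcal{O}_C(1)$ is complete and very ample, and Riemann--Roch gives $\deg(K_C-L)=2g-2-d=7$ and $h^0(K_C-L)=h^1(L)=g-d+r=3$; hence the residual $E:=|K_C-L|$ is a complete net $g^2_7$, and conversely a complete $g^2_7$ with very ample residual $K_C-E$ produces such a curve. The entire proposition is thus governed by the nature of $E$: birationally very ample, compounded, or with base points. A birationally very ample $g^2_7$ realises $C$ as the normalisation of a plane septic, whose geometric genus is at most $\pi(7,2)=\binom{6}{2}=15$; the inequality $g\le 15$, i.e. $r\le 9$, is exactly the dividing line, which explains the split between (i) and (ii).

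For part (ii) I would run this construction forward. For $5\le r\le 9$ set $\delta:=15-g=9-r\ge 0$; a general plane septic with $\delta$ nodes has geometric genus $g=r+6$, and its normalisation $C$ carries the birationally very ample net $E=g^2_7$ cut by lines. Its residual is the classical adjoint series, namely plane cubics through the $\delta$ nodes restricted to $C$, a $g^r_{2r+3}$ with $\deg=21-2\delta=2r+3$ and $h^0=10-\delta=r+1$. First I would verify that, for a general point of the Severi variety, these adjoint cubics separate points and tangent vectors, so that $K_C-E$ is very ample and embeds $C$ as a linearly normal curve; combined with the projection argument of part (i) this gives $\HL{2r+3,r+6,r}=\h{H}_{2r+3,r+6,r}\neq\emptyset$. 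Irreducibility then follows from Theorem \ref{severi}: residuation identifies $\widetilde{\mathcal{G}}_\mathcal{L}\subset\mathcal{G}^r_{2r+3}$ birationally with the irreducible locus $\mathcal{G}'\subset\mathcal{G}^2_7$ of birationally very ample nets, once one knows (as in (i)) that a compounded $g^2_7$ never has a very ample residual, so that every linearly normal such curve comes from a birationally very ample net. The numerics are automatic, since residuation preserves $\rho$ and hence $\lambda(7,r+6,2)=\lambda(2r+3,r+6,r)=r+18$; the expected dimension, the expected number of moduli, and generic smoothness are then inherited from the corresponding properties of $\mathcal{G}'$ (equivalently of $\Sigma_{7,r+6}$) and of its image in $\mathcal{M}_g$.

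For part (i), where $r\ge 10$ and so $g\ge 16$, I would first reduce to the linearly normal case. If $C$ were not linearly normal it would be the isomorphic projection of a nondegenerate curve of the same degree and genus in some $\PP^{r'}$ with $r'\ge r+1$; but Castelnuovo's bound gives $g\le\pi(2r+3,r')\le\pi(2r+3,r+1)=r+4<r+6$, a contradiction. Hence $L=\mathcal{O}_C(1)$ is complete and the residual $E$ is a complete $g^2_7$ on a curve of genus $\ge 16$. Now $E$ cannot be birationally very ample by the plane-septic bound $g\le 15$; a base-point case has a moving part $g^2_{7-b}$ of degree $\le 6$, which is birationally very ample only in genus $\le\pi(6,2)=10$ and is otherwise compounded; so in all cases one is reduced to showing that a compounded net of degree $\le 7$ cannot have a very ample residual.

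This last step is the main obstacle. A compounded moving part is pulled back from a $g^2_\delta$ on a nondegenerate plane curve $C'$ of degree $\delta\le 3$ under a covering $C\to C'$ of degree $k\ge 2$; the numerically admissible pairs $(\delta,k)\in\{(2,2),(2,3),(3,2)\}$ make $C$ hyperelliptic, trigonal, or a double cover of a plane cubic. The hyperelliptic case is excluded at once because $L=K_C-E$ is special and very ample. For the remaining covering structures I would show that the pencil of degree $\le 3$ (respectively the degree-$2$ map) forces a trisecant to the image of $K_C-E$, so that $K_C-E$ fails to separate two points; equivalently, one may argue that a curve of near-maximal genus $g=\pi(2r+3,r)-1=r+6$ must lie on a rational normal scroll $S$ of minimal degree $r-1$, where a smooth curve $C\equiv aH+bR$ of degree $2r+3$ satisfies $a(r-1)+b=2r+3$ and, by adjunction on $S$, has genus $r+3$ (if $a=2$) or $r+7$ (if $a=3$), never the value $r+6$. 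Pushing through this scroll analysis, together with the degenerate case of a cone and the matching with the compounded classification, is where the genuine difficulty of part (i) resides.
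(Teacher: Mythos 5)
Your skeleton for part (i) agrees with the paper's up to the last step: completeness of the hyperplane series via $\pi(2r+3,r+1)=r+4$, exclusion of a birationally very ample $g^2_7$ via $g>15$, the base-point bookkeeping via $\pi(6,2)=10$, and the resulting short list of compounded structures (hyperelliptic excluded because $K_C-\h{E}$ is special and very ample, a singular cubic image again giving hyperelliptic, so only trigonal with $\h{E}=|2g^1_3+q|$ and bielliptic with $\h{E}=|\phi^*(g^2_3)+q|$ survive). But the decisive step --- that these two compounded nets never have a very ample residual --- you do not actually prove: you name the trisecant idea and then defer to a scroll analysis which you yourself flag as ``where the genuine difficulty resides.'' That substitute route rests on an unjustified premise: a curve of genus $\pi(2r+3,r)-1=r+6$ need not \emph{a priori} lie on a surface of minimal degree $r-1$; to force this you would need $r+6>\pi_1(2r+3,r)$ together with the theorem of Harris that curves of genus above $\pi_1$ lie on minimal-degree surfaces, neither of which you state or verify, and you would further have to treat cones (curves through the vertex) and classes with $a\ge4$, not only $a\in\{2,3\}$. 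The paper instead finishes with two lines of linear-series arithmetic, which is exactly what your first (trisecant) suggestion was groping for: in the trigonal case, letting $q+t+s\in g^1_3$ be the trigonal divisor through the base point $q$, one has $|\h{D}-t-s|=|K_C-2g^1_3-q-t-s|=|K_C-3g^1_3|$, and since $h^0(3g^1_3)\ge4$, Riemann--Roch gives $\dim|\h{D}-t-s|=h^0(3g^1_3)+g-11\ge r-1>r-2$; in the bielliptic case $\dim|\h{E}+\phi^*(u)|\ge3$ for every $u\in E$, whence $\dim|\h{D}-\phi^*(u)|\ge r-1$. Either inequality contradicts the very ampleness of $\h{D}$. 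Until you supply this computation (or fully execute the scroll route, including its nontrivial classical input), part (i) is not proved.

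Part (ii) of your proposal is essentially the paper's argument: residuation identifies $\widetilde{\mathcal{G}}_\mathcal{L}$ birationally with the irreducible locus $\mathcal{G}'\subset\mathcal{G}^2_7$ (the paper additionally runs the dimension count $\dim\mathcal{W}^\vee=4g-3r-6=\dim\mathcal{G}'$ via Proposition~\ref{wrdbd}(b) to see that $\kappa$ is dominant), the equality $\h{H}^\h{L}_{2r+3,r+6,r}=\h{H}_{2r+3,r+6,r}$ comes from the same Castelnuovo projection argument, and the expected number of moduli is inherited from $\Sigma_{7,r+6}$ via Sernesi in both treatments. The one item you leave as ``to verify'' --- that the adjoint cubics through the $\delta=9-r$ nodes cut a complete, very ample $g^r_{2r+3}$ on the normalization of a general nodal septic --- is precisely where the paper does concrete work: for $r=9$ the curve is a smooth septic embedded by $|3L|$, and for $5\le r\le8$ it realizes $C\in(7;2,\dots,2)$ on the blow-up $S_{9-r}$ of $\PP^2$ at $9-r\le4$ general points, quotes Hartshorne for very ampleness of $(7;2,\dots,2)$, and embeds by the anticanonical system $(3;1,\dots,1)$, which is very ample there. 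So this gap is fillable by a standard device, but it does not follow automatically from generality on the Severi variety alone, and your write-up should not treat it as routine.
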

\begin{proof} (i) Let $C\subset\PP^r$ ($r\ge 10$) be a smooth, non-degenerate and irreducible curve of genus $g=r+6$ with  very ample hyperplane series $\h{D}=g^r_{2r+3}$. We may assume that $\h{D}$ is complete, otherwise we have 
$$g=r+6\le\pi (2r+3, r+1)=r+4$$ which is an absurdity. Consider the residual series $\h{E}=|K_C-\h{D}|=g^2_7$. Note that 
$$g=r+6 \gneqq \frac{(e-1)(e-2)}{2}$$  
for any $e\le 7$. Hence $\h{E}$ is compounded, with the non-empty  base locus $\Delta=\{q\}$ and $C$ is either 
\begin{enumerate}
\item[(a)]
 trigonal with $\h{E}=|2g^1_3+q|$ or 
 \item[(b)] bielliptic with $\h{E}=|\phi^*(g^2_3)+q|$, where $C\stackrel{\phi}{\rightarrow} E$ is the double covering of an elliptic curve $E$. 
 \end{enumerate}
 If $C$ is trigonal,  let  $q+t+s\in g^1_3$ be the unique trigonal divisor containing the base point $q$. We then have$$\dim|\h{D}-t-s|=\dim|K-2g^1_3-q-t-s|=\dim| K-3g^1_3|\ge r-1.$$ 
 \vni
 If $C$ is bielliptic,
 $$\dim|\h{E}+\phi^*(u)|=\dim|\phi^*(g^2_3+u)+q|\ge 3,$$ for any $u\in E$, implying $$\dim|\h{D}-\phi^*(u)|=\dim|K-\h{E}-\phi^*(u)|=\dim|K_C-\phi^*(g^2_3+u)-q|\ge r-1, $$ 
which is a contradiction since $\h{D}$ is very ample. 
 
 \vni
(ii)  For $5\le r\le 9$,  $\h{H}^\h{L}_{2r+3, r+6,r}=\h{H}_{2r+3, r+6,r}$ is irreducible of the expected dimension 
$$\lambda (2r+3,r+6,r) +\dim\PP\textrm{GL}(r+1)$$ and the proof is rather similar to the proof of \cite[Theorem 2.5]{JPAA}, which asserts the irreducibility of $\HL{g+r-3,g,r}$ for $g\ge 2r+3$. However, we present a shortened version of a proof for the convenience of readers. Let  $\widetilde{\mathcal{G}}_\mathcal{L}$ be the union of irreducible components $\mathcal{G}$ of $\mathcal{G}^{r}_{g+r-3}$ whose general element corresponds to a pair $(p,\mathcal{D})$ such that $\mathcal{D}$ is very ample and complete  linear series on $C:=\xi^{-1}(p)$ -- as was defined in the paragraph preceding Proposition \ref{facts}. Assuming the existence of a  component ${\mathcal{G}}\subset \widetilde{\mathcal{G}}_\mathcal{L}\subset \widetilde{\mathcal{G}}\subset\mathcal{G}^{r}_{g+r-3}$, we note that the moving part of a  general element  $$\h{E}\in\mathcal{W}^{\vee}\subset\mathcal{W}^{2}_{g-r+1}=\h{W}^2_7$$ (where $\h{W}^\vee$ is defined as in the proof of Theorem \ref{g+r-2}) is birationally very ample by the proof of (i)  and is base-point-free since 
$$g=r+6\ge 11\gneqq \frac{(6-1)(6-2)}{2}.$$  Using Proposition \ref{wrdbd}(b), 
one has the exact dimension estimate 
\begin{equation}\label{equall}
	\dim\mathcal{G}=\dim\mathcal{W}=\dim\mathcal{W}^{\vee}=4g-3r-6=\lambda (g+r-3,g,r).
\end{equation}

\noindent
Let $\widetilde{\mathcal{W}}^\vee$ be the union of the  components $\mathcal{W}^\vee$ of $\mathcal{W}^2_{g-r+1}$ corresponding to each $\mathcal{G}\subset\widetilde{\mathcal{G}}_\mathcal{L}$.
We also let $\mathcal{G}'$ be the union of irreducible components of $\mathcal{G}^{2}_{g-r+1}$ whose general element is a base-point-free and birationally very ample linear series.  We recall that, by Theorem \ref{severi}, $\mathcal{G'}$ is irreducible and $$\dim \mathcal{G'}=3(g-r+1)+g-9=4g-3r-6.$$
By  the equality (\ref{equall}), \begin{equation}\label{dominantt}\dim \mathcal{W}^{\vee}=\dim\mathcal{G}=4g-3r-6=\dim \mathcal{G}'.\end{equation}
Then one may argue that there is a natural generically injective dominant rational map $\widetilde{\mathcal{W}}^\vee\overset{\kappa}{\dashrightarrow}\mathcal{G}'$ with $\kappa(|\mathcal{E}|)=\mathcal{E}$ as well as 
another natural rational map 
$\mathcal{G}' \overset{\iota}{\dashrightarrow}\widetilde{\mathcal{W}}{^\vee}$ with $\iota (\mathcal{E})=|\mathcal{E|}$, which is an inverse to $\kappa$ (wherever it is defined). It then follows that $\widetilde{\mathcal{W}}^{\vee}$ is birationally equivalent to  the irreducible locus $\mathcal{G}'$, hence $\widetilde{\mathcal{W}}^{\vee}$ is irreducible and so is $\widetilde{\mathcal{G}}_\mathcal{L}$. Since $\mathcal{H}_{g+r-3,g,r}^\mathcal{L}$ is a $\mathbb{P}\textrm{GL}(r+1)$-bundle over an open subset of $\widetilde{\mathcal{G}}_\mathcal{L}$,  $\mathcal{H}_{g+r-3,g,r}^\mathcal{L}$ is irreducible of the expected dimension.
\vni
As was suggested by the proof of the irreducibility which we outlined above, we show the existence of 
$\h{H}^\h{L}_{g+r-3, g,r}=\h{H}^\h{L}_{2r+3, r+6,r}$  ($5\le r\le 9$)
 as follows.
\begin{enumerate}
\item[(1)] In particular for $r=9$, $$\h{H}^\h{L}_{2r+3, r+6,r}=\h{H}^\h{L}_{21, 15,9}$$ is the family of smooth plane septics embedded in $\PP^9$ by the $3$-tuple embedding; note that $\h{E}=g^2_7$ is birationally very ample and $g=15=p_a(C_\h{E})$ where $C_{\h{E}}\subset\PP^2$ is the plane curve induced by $\h{E}$. 
\item[(2)] The existence of $\h{H}^\h{L}_{2r+3, r+6,r}$ for the other $5\le r\le 8$ is assured by the existence of appropriate nodal plane septics as follows.

\vni We  fix and use the following standard convention for the rest of the paper.  $S_n \stackrel{\pi}{\rightarrow} \PP^2$ is the rational surface $\PP^2$ blown up at $n$-points in general position. Let $(a; b_1,\cdots , b_n)$ denote the linear system $|\pi^*(aL)-\sum_{i=1}^{n}b_iE_i|$ on $S_n$ -- where  $\{E_i\}_{i=1}^n$ are the  exceptional divisors with linear equivalence classes $\{e_i\}_{i=1}^n$
and $L$ is a line in $\PP^2$. 
 Let $l\in \text {Pic}(S_n)$ be the class of $\pi^*(L)$.

\vni
Here we consider the surface $S:=S_{9-r} \stackrel{\pi}{\rightarrow}\PP^2$ ($5\le r\le 8$).  We note that $(7;2,\cdots,2)$ is very ample on $S$; cf. \cite[V 4.12, Ex.4.8]{Hartshorne} and hence contains an irreducible and non-singular member $C$, which is a curve of genus
$$g=\frac{1}{2}(7-1)(7-2)-\frac{1}{2}\sum b_i(b_i-1)=15-(9-r)=r+6$$
and degree
$$d=(3;1,\cdots,1)\cdot(7;2,\cdots,2)=21-2(9-r)=g+r-3$$
in $\PP^r$ under the embedding induced by the anti-canonical linear system $(3;1,\cdots, 1)$. 
\end{enumerate}
\vni The functorial map $\h{H}^\h{L}_{2r+3, r+6,r}\dashrightarrow\h{M}_g$ factors through $\h{G}=\widetilde{\h{G}}_\h{L}\subset\h{G}^r_{2r+3}$ and the functorial map  $\Sigma_{7,r+6}\dasharrow\h{M}_g$ factors through $\h{G}'\subset\h{G}^2_7$. Since
$$\h{G}\stackrel{bir}{\cong}\h{W}\stackrel{bir}{\cong}\h{W}^\vee\stackrel{bir}{\cong}\h{G}',$$ 
the two functorial maps have the same image in $\h{M}_g$. By \cite[Theorem 4.2]{Sernesi}, $\Sigma_{7,r+6}$ has the expected number of moduli and since 
$$\rho (2r+3,r+6,r)=\rho (7,r+6,2),$$
$\h{H}^\h{L}_{2r+3, r+6,r}$ also has the expected number of moduli.
\end{proof}

\vni
Almost all the components of $\h{H}^\h{L}_{d,g,r}$ which we have seen so far in this article have the minimal possible dimension $$\h{X}(d,g,r):=\lambda (d,g,r)+\dim\PP\textrm{GL}(r+1)$$
with the expected number of moduli.

\vni
For the next case $g=r+7$, the following theorem asserts that
$\h{H}^\h{L}_{g+r-3,g,r}=\h{H}^\h{L}_{2r+3, r+7,r}$ is non-empty but reducible for every $r\ge 5$. Moreover if  $r\ge 15$,  no component of $\h{H}^\h{L}_{2r+3, r+6,r}$ has the expected number of moduli,  unlike the previous case $g=r+6$.

\begin{thm}\label{g=r+7} 
\begin{enumerate}
\item[(i)]
For $g=r+7$ and $r\ge 15$,
$$\h{H}^\h{L}_{g+r-3,g,r}=\h{H}^\h{L}_{2r+4,r+7,r}\neq\emptyset$$ and is reducible without any component of the expected dimension. Moreover all the  components 
have more than the expected number of moduli.
\item[(ii)] For $g=r+7$ and $5\le r\le 14$,
$$\h{H}^\h{L}_{g+r-3,g,r}=\h{H}^\h{L}_{2r+4,r+7,r}\neq\emptyset$$ and is reducible  with at least one component of the expected dimension having the expected number of moduli, together with another component having more than the expected number of moduli.
\end{enumerate}
\end{thm}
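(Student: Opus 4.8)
The plan is to analyze $\HL{2r+4,r+7,r}$ through residuation, exactly as in Proposition \ref{empty}. For a very ample complete hyperplane series $\h{D}=g^r_{2r+4}$ on a curve $C$ of genus $g=r+7$, Riemann--Roch gives $h^1(\h{D})=3$, so the residual series $$\h{E}=|K_C-\h{D}|=g^2_8$$ is a net of degree $8$, and $\h{D}$ is recovered as $|K_C-\h{E}|$. Thus the problem reduces to classifying the nets $\h{E}=g^2_8$ on genus-$(r+7)$ curves whose residual $|K_C-\h{E}|$ is very ample, and to computing the dimension and number of moduli of each resulting family in $\mathcal{W}^2_8$ — equivalently in $\widetilde{\h{G}}_\h{L}\subset\h{G}^r_{2r+4}$, to which it is birational. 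The decisive fact is that a plane octic has arithmetic genus $\binom{7}{2}=21$: a birationally very ample $\h{E}$ realizes $C$ as a nodal plane octic of geometric genus $g$, forcing $g\le 21$, i.e. $r\le 14$. This single inequality separates (i) from (ii).

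First I would dispose of the birationally very ample case, available precisely when $r\le 14$. Then $\h{E}$ ranges over the locus $\h{G}'\subset\h{G}^2_8$ of base-point-free birationally very ample nets, irreducible of dimension $\lambda(8,r+7,2)=r+22=\lambda(2r+4,r+7,r)$ by Theorem \ref{severi}; as in Proposition \ref{empty} one checks that $\h{D}=|K_C-\h{E}|$ is very ample for the general such $\h{E}$ and that $\widetilde{\mathcal{W}}^\vee\dashrightarrow\h{G}'$ is birational. This yields an irreducible component of $\HL{2r+4,r+7,r}$ of the expected dimension $\h{X}(2r+4,r+7,r)$ whose image in $\h{M}_g$ coincides with that of $\Sigma_{8,r+7}$, hence with the expected number of moduli $3g-3+\rho=r+22$. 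This is the ``expected'' component of (ii); it is absent for $r\ge 15$.

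Next I would classify the compounded nets with very ample residual. Writing the induced map as a degree-$k$ cover $C\to C'\subset\PP^2$ onto a plane curve of degree $m$ with base locus of degree $b$, so $km=8-b$, nondegeneracy forces $m\ge 2$, and the bielliptic/hyperelliptic computation of Proposition \ref{empty} (adjoining a fibre to $\h{E}$ raises $h^0$, so $\h{D}$ is not very ample) eliminates every possibility except two: the tetragonal case $\h{E}=2g^1_4$ (image a conic, $k=4$) and the double cover of a smooth plane quartic $\h{E}=\phi^\ast g^2_4$ (image a quartic, $k=2$), where $\h{D}=|K_C-\phi^\ast K_{C'}|=|R|$ is the ramification series. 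These families are distinct, the covering degree $k$ of the general residual net being $4$ and $2$ respectively. A gonality-stratum count gives $\dim\mathcal{M}^1_{r+7,4}=2g+3=2r+17$, while double covers of plane quartics branched at $\deg B=2r+4$ points form a family of dimension $\dim\mathcal{M}_3+(2r+4)=2r+10$; in each case $\h{E}$ is determined by $C$ up to finitely many choices, so these are also the dimensions of the corresponding loci, consistent with the bound $\dim\mathcal{W}\le 2g-1+8-4=2r+17$ of Proposition \ref{wrdbd}(c). Comparison with $\lambda=r+22$ shows the tetragonal locus is a genuine component for all $r\ge 5$ (strictly exceeding $\lambda$ for $r\ge 6$), and the double-cover locus a genuine component exactly for $r\ge 13$.

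Finally I would assemble the two regimes. For $r\ge 15$ there is no birationally very ample component, while the tetragonal component ($2r+17$) and the double-cover component ($2r+10$) are both present with dimension and number of moduli strictly above $r+22$; hence $\HL{2r+4,r+7,r}$ is reducible, has no component of the expected dimension, and all components have more than the expected number of moduli. For $6\le r\le 14$ the Severi component (expected dimension and moduli) coexists with the tetragonal component, whose $2r+17$ moduli strictly exceed $r+22$, giving the reducibility of (ii). The delicate boundary is $r=5$, where $2r+17=27=\lambda$ and the tetragonal count meets the expected value, so the excess-moduli component must be produced by a separate construction; I would handle $r=5$ directly. The main obstacle is not the numerology but verifying very ampleness of the residual $\h{D}$ for each compounded family — that $|K_C-2g^1_4|$ and the ramification series $|R|$ are very ample for the general member, which amounts to controlling the pairs $p+q$ (the fibre pairs being critical) for which $h^0(\h{E}+p+q)$ might jump — together with confirming that these loci are honest irreducible components meeting the lower bound of Proposition \ref{facts}(1).
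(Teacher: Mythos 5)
Your overall strategy is exactly the paper's: residuation to $\h{E}=|K_C-\h{D}|=g^2_8$, the threshold $g\le\frac{(8-1)(8-2)}{2}=21$ (i.e.\ $r\le 14$) separating the birationally very ample regime from the compounded one, Theorem \ref{severi} supplying the irreducible expected-dimension family for $r\le 14$, the classification of compounded residuals into $\h{E}=2g^1_4$ and $\h{E}=\phi^*K_E$ for a double cover of a non-hyperelliptic genus-$3$ curve, the counts $2r+17$ and $2r+10$ against $\lambda=r+22$, and gonality semicontinuity to keep the loci apart. So the skeleton and numerology match the paper's proof closely.

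The one genuine gap is the non-emptiness of the Severi component $\h{H}_{\Sigma_{8,g}}$ for $5\le r\le 13$, which is precisely what carries the ``expected dimension with expected number of moduli'' clause of (ii). Your phrase ``as in Proposition \ref{empty} one checks that $\h{D}=|K_C-\h{E}|$ is very ample for the general such $\h{E}$'' underestimates this step: Proposition \ref{empty}(ii) embeds a smooth member of $(7;2,\dots,2)$ by the \emph{anti-canonical} system on $S_{9-r}$, and that argument does not transfer to octics. The paper instead produces a nodal octic with nodes at $\delta=14-r$ \emph{general} points via Arbarello--Cornalba \cite[Theorem 3.2]{AC1}, blows up to $S_{14-r}$, and embeds by $|H|=|\pi^*(4L)-\sum E_i|$, whose very ampleness needs d'Almeida--Hirschowitz \cite[Theorem 0]{Coppens} (the inequality $\delta\le\frac{(4+3)4}{2}-5$ is exactly $r\ge5$); moreover one must check \emph{completeness} of the line-cut $g^2_8$ and $h^0(S,H)=r+1$ (linear normality), which the paper gets from the nodes imposing independent conditions, using the auxiliary septic $E'\in\Sigma_{7,r+1}$ through $\Delta$. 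Your proposal never addresses completeness, yet without it the residual pair $(\h{E},\h{D})$ does not have the right dimensions. The two compounded very-ampleness statements you defer are real but fillable along the lines you indicate: $|K_C-2g^1_4|$ is \cite[Corollary 3.3]{Keem}, and for $|K_C-\phi^*K_E|$ the paper runs a projection-formula/Mumford-splitting computation giving $h^0(C,\phi^*M)=h^0(E,M)$ for $\deg M<g-5$, which controls exactly the fibre pairs you worry about (here $g=r+7\ge22$ is used).

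Two numerical points. First, at $r=12$ one has $2r+10=r+22$, and the paper concludes $\h{H}_{(2,3)}$ is already a separate component for $12\le r\le14$: it cannot lie in the boundary of the $\lambda$-dimensional Severi component for dimension reasons, nor in $\h{H}_{4,0}$ by gonality; so your ``exactly for $r\ge13$'' is off by one (harmless for the theorem's assertions). Second, your unease at $r=5$ is well founded but resolves the other way: since $2r+17=\lambda=27$ there, $\h{H}_{4,0}$ has exactly the expected dimension and the expected number of moduli, and by the paper's own Table 1 ($r=5$ row: zero components with more than the expected number of moduli) there is no excess-moduli component to be ``produced by a separate construction'' --- the corresponding clause of (ii) simply does not hold at this boundary value, and the table is the paper's implicit correction. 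Your proof should state this rather than search for a missing component.
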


\begin{proof} (i) We deal with the case $r\ge 15$ first. Let 
$$\h{D}=g^r_{2r+4}\in \h{G}\subset\widetilde{\h{G}}_\h{L}\subset \h{G}^r_{2r+4}$$ be a complete and very ample linear series on a smooth curve $C$ of genus $g=r+7$. 
The residual series $\h{E}=|K_C-\h{D}|=g^2_8$ is compounded since $$g=r+7\gneqq21=\frac{(8-1)(8-2)}{2}\quad \textrm{for} \quad r\ge 15.$$

\vni
Let $\delta$ be the degree of the base locus $\Delta$ of the compounded $\h{E}$. We list up all the possible cases as follows.

\begin{enumerate}
\item[(1)] $\delta =0$, $\h{E}=2g^1_4$ and $C$ is $4$-gonal.
Recall that on a general $4$-gonal curve $C$, there is  a unique $g^1_4$ and 
$$|K_C-2g^1_4|=g^r_{2r+4}$$ is very ample by \cite[Corollary 3.3]{Keem}.  Hence $\h{G}$ dominates the irreducible locus $\h{M}^1_{g,4}$ and $\dim\h{G}=\dim\h{M}^1_{g,4}$.
Since
\begin{equation*}
\lambda (d,g,r)=3g-3+\rho(d,g,r)=r+22\lneqq\dim\h{M}^1_{g,4}=2g+3=2r+17
\end{equation*}
the component $\h{G}$ dominating $\h{M}^1_{g,4}$ as well as the corresponding component of $\h{H}^\h{L}_{2r+4,r+7,r}$ over $\h{G}$ -- which we denote\footnote{Notations of this kind shall be defined precisely in a more general context  in the proof of Theorem \ref{reducible}. } by $\h{H}_{4,0}$-- have more than the expected number of moduli, with the dimension strictly greater than the expected dimension $\h{X}(d,g,r)$.
\item[(2)]  $\delta =0$, $C$ is a double cover $C\stackrel{\pi}{\rightarrow} E$ of a non-hyperelliptic curve          $E$ of genus $h=3$ and $\h{E}=\pi^*(g^2_4)=\pi^*(|K_E|)$.
\vni Let  $\mathcal{X}_{n,h}$ be the locus in $\mathcal{M}_g$ corresponding to curves
which are n-fold coverings of smooth curves of genus $h$. In the current case, 
there exist a sequence of natural rational maps $$\h{G}\dashrightarrow\mathcal{W}\dashrightarrow \mathcal{W}^\vee\dashrightarrow \mathcal{X}_{2,3},$$ which are generically injective into a component of $\h{X}_{2,3}$.

\vni
Recall that for a covering $C\stackrel{\pi}{\rightarrow} E$ and a line bundle $M$ on the base curve $E$ of genus $h$, it is well known by projection formula  that
\begin{eqnarray*} H^0(C, \pi^*M)&=&
H^0(E,\pi_*\pi^*M)=H^0(E, \pi_*(\pi^*M\otimes_{\h{O}_C}\h{O}_C))\\&=&H^0(E,M\otimes_{\h{O}_E}\pi_*\h{O}_C),
\end{eqnarray*}
and that $$\det\pi_*\h{O}_C\cong\h{O}_{E}(-D)$$ for some divisor $D$ on $E$ such that $2D$ is linearly equivalent to the branch divisor $B$ of $\pi$ consisting of  those points on $E$ over which $\pi$ ramifies; cf. \cite[IV, Ex. 2.6]{Hartshorne}). In particular, the vector bundle $\pi_*\h{O}_C$ on $E$ of rank equal to $\deg\pi$ has degree $$-\deg D=-\frac{1}{2}\deg B=(\deg\pi)\cdot(h-1)-(g-1)\le 0.$$
If  $\pi$ is a double covering, the rank two vector bundle $\pi_*\h{O}_C$ splits into the line bundles $\h{O}_{E}$ and $\det\pi_*\h{O}_C$ of degree $2(h-1)-(g-1)=2h-g-1$; cf. \cite{Mumford}. 
In our case,  we have 
$$H^0(C,\pi^*M)=H^0(E, M)\oplus H^0(E, M\otimes\h{O}_{E}(-D)), $$
and if $\deg M<\deg D=g+1-2h$, then $$H^0(E, M\otimes\h{O}_{E}(-D))=0$$ and hence 
\begin{equation}\label{pushpull}
H^0(C,\pi^*M)=H^0(E,M).
\end{equation}

\vni
We first put $M=\h{O}(K_E)$. Since  $\deg M=2\cdot 3-2\le g-2\cdot 3$ by our genus assumption $g=r+7\ge 22$, we have 
$$h^0(C,\pi^*\h{O}(K_{E}))=h^0(E, \h{O}(K_{E}))=h=3.$$
We now show that the complete linear series $|K_C-\pi^*K_{E}|$ is very ample. Take any $T+S\in C_2$ and put
$$s=\pi (S), t=\pi(T), S+S'=\pi^*(s), T+T'=\pi^*(t).$$ We have
$$|\pi^*(K_{E})+S+T|=|\pi^*(K_{E}+s+t)-S'-T'|.$$ Substituting $M=\h{O}(K_{E}+s+t)$ in (\ref{pushpull}), 
$$h^0(C, \h{O}(\pi^*(K_{E}+s+t)))=h^0(E, \h{O}(K_{E}+s+t))=h+1=4.$$
Note that  $|K_{E}+s+t|$ is base-point-free and hence  $|\pi^*(K_{E}+s+t)|$ is also base-point-free. We finally have
\begin{eqnarray*}
\hskip 25pt h^0(C, \h{O}(\pi^*(K_{E})+S+T))=h^0(C, \h{O}(\pi^*(K_{E}+s+t)-S'-T'))\\
\hskip 40pt \le h^0(C,\h{O}(\pi^*(K_{E}+s+t)))-1=h=h^0(C,\h{O}(\pi^*K_{E}))
\end{eqnarray*} and therefore  $|K_C-\pi^*K_{E} |$ is very ample.

\vni
So far we have shown that $\h{G}$ dominates a component of $\h{X}_{2,3}$. 

\vni
By  \cite[Theorem 8.23, p.828]{ACGH2},  $\h{X}_{n, h}$ has pure dimension $$\dim\mathcal{X}_{n,h}=2g+(2n-3)(1-h)-2,$$ and by noting that there is a generically one-to-one natural rational map between $\h{G}$ and a component of $\mathcal{X}_{2,3}$, we have 
$$\dim\h{G}=\dim\h{W}=\dim\mathcal{X}_{2,3}=2g-4.$$
Since 
\begin{equation}\label{s}
r+22=\lambda(g+r-3, g,r)\lneqq\dim\h{G}=\dim\mathcal{X}_{2,3}=2g-4=2r+10,
\end{equation}
there is a component $\h{H}_{(2,3)}\subset \h{H}^\h{L}_{g+r-3,g,r}$ arising from and dominating a component of $\h{X}_{2,3}\subset\h{M}_g$, which has more than the expected 
number of moduli by the strict inequality (\ref{s}). We observe that a general element of $\h{H}_{(2,3)}$ is $6$-gonal whereas a general element of $\h{H}_{4,0}$ is $4$-gonal and $$\dim\h{H}_{4,0}=2r+17+\dim\PP\textrm{GL}(r+1)\gneqq\dim\h{H}_{(2,3)}=2r+10+\dim\PP\textrm{GL}(r+1).$$ 
 By semi-continuity it follows that the locus $\h{H}_{(2,3)}$  is not contained in the boundary of $\h{H}_{4,0}$. This shows that $\h{H}_{4,0}$ and $\h{H}_{(2,3)}$ are distinct irreducible components. 
\end{enumerate}
\vni Now we are only left with the following four possibilities.

\begin{enumerate}
\item[(3)]
$\delta =0$ and $\h{E}$ induces  a degree two morphism $C\stackrel{\eta}{\rightarrow} E\subset\PP^2$ onto a singular quartic curve $E$ of geometric genus $h=1$. In this case,  divisors of degree four cut out by lines in  
$\PP^2$ move in a complete linear series of dimension three on the normalization of $E$ by Riemann-Roch hence  $\dim\h{E}=\dim|\eta^*\h{O}_{\PP^2}(1)|=3$, a contradiction.

\item[(4)]
$\delta =0$ and $\h{E}$ induces  a double covering onto a curve of genus $h=2$.
\item[(5)] $\delta =2$, $\h{E}=|2g^1_3+q+s|$ and $C$ is trigonal. 

\item[(6)] $\delta =2$ and $\h{E}=|\phi^*(g^2_3)+q+s|$ for a double covering $C\stackrel{\phi}{\rightarrow} E$ onto an elliptic curve $E$.
\end{enumerate}
\vni
However for the above three cases (4), (5) and (6), the residual series of the compounded $\h{E}$ is not very ample and we omit the easy verification, which is similar to the proof of Proposition \ref{empty} (i).

\vni 
(ii)
For $5\le r\le 14$, there is a  possibility that $\h{E}=g^2_8\in\h{W}^\vee\subset\h{W}^2_8$ -- which is the residual series of a complete very ample $\h{D}\in\h{G}\subset\h{G}^r_{g+r-3}$ -- is birationally very ample or very ample. In this case we may proceed in a similar way as we did in the proof of Proposition \ref{empty}(ii), to come up
with an irreducible family\footnote{The author apologizes for the complicated notation involved. For later use we make the following definition. Let $\h{H}_{\Sigma_{e,g}}$ be the irreducible family in the  Hilbert scheme of smooth curves $\h{H}^\h{L}_{g+r-3,g,r}$ consisting of curves such that the residual series of the very ample and complete hyperplane series $g^r_{g+r-3}$ is a net cut out by lines on the plane curve corresponding to a general element of the Severi variety $\Sigma_{e,g}$, where $e=g-r+1$.} $\h{H}_{\Sigma_{8,g}}\subset\h{H}^\h{L}_{g+r-3,g,r}$ of minimal possible dimension $\h{X}(d,g,r)$.  If $\h{H}_{\Sigma_{8,g}}\neq\emptyset$, the residual series
of the hyperplane series corresponding to a general element of $\h{H}_{\Sigma_{8,g}}$ is a birationally very ample,  base-point-free net $g^2_8$ inducing a plane curve of degree $8$ with only nodal singularities;  by abuse of terminology, one might describe this situation as {\bf ``the family  $\h{H}_{\Sigma_{8,g}}$ corresponds to the Severi variety $\Sigma_{8,g}$".}

\vni
We now show that the family $\h{H}_{\Sigma_{8,g}}$ is non-empty. The issue is whether the residual series of the
linear system cut out by lines on the plane nodal curve $E$ corresponding to a general member of the Severi variety $\Sigma_{8,r+7}$ is a complete and very ample $g^r_{g+r-3}$. For $r=14$ -- which is a trivial case --  $\Sigma_{8,r+7}=\PP^{\frac{8(8+3)}{2}}$  is the family of smooth plane octics and the residual series of the linear series $\h{E}=|L|$ cut out by lines on a smooth plane octic is the linear series $4\h{E}=|4L|$, which is very ample. Hence $\h{H}_{\Sigma_{8,g}}$ is  the family of smooth plane octics embedded by the $4$-tuple embedding into $\PP^{14}=\PP^{\frac{4(4+3)}{2}}$  as curves of genus $g$ and degree $d$ where $$g=r+7=21=\frac{(8-1)(8-2)}{2}, \quad d=g+r-3=32=4\cdot 8. $$

 \vni
 For the existence of smooth curves in lower dimensional projective space $\PP^r$, $5\le r\le 13$, we first fix two integers $e=8=g-r+1$ and 
 \begin{equation}\label{delta}\delta=\frac{(e-1)(e-2)}{2}-g=21-(r+7)=14-r.
 \end{equation}
 Let $$\Delta =\{p_1, \cdots , p_{\delta}\}\in{\text Sym}^\delta(\PP^2)$$ be a general set of $\delta$ points in $\PP^2$. 
 We note that 
 \begin{equation}\label{AC}\delta=14-r \le {\rm min} \{e(e+3)/6,(e-1)(e-2)/2\} \textrm{~and~}  (e,\delta)\neq (6,9).
 \end{equation}  By a  result due to Arbarello and Cornalba \cite[Theorem 3.2]{AC1} regarding the configuration of nodes on plane nodal curves in  $\Sigma_{e, g}$, there exists a curve $E\in\Sigma_{e,g}$ having nodes at $\Delta$ and no further singularities. With the same choice of a general $\Delta$, there also exists a curve $E'\in\Sigma_{e-1, h}$ of genus $h$ and degree $e-1$ having nodes at $\Delta$ where $h=\frac{(e-2)(e-3)}{2}-\delta=r+1$.

\vni
Let $S:=S_\delta\stackrel{\pi}{\rightarrow}\PP^2$ be the blowing up of $\PP^2$ at  $\Delta$. 
We consider the linear system 
$$|H|=|\pi^*((e-4)L)-\sum_{i=1}^\delta E_i| $$ on $S$.
Recall that by a result of  d'Almeida and Hirschowitz \cite[Theorem 0]{Coppens}, the linear system $|\pi^*(tL)-\sum_{i=1}^\delta E_i|$ on $S$ is very ample if 
\begin{equation}\label{almeida}\delta\le \frac{(t+3)t}{2}-5.
\end{equation} Taking $t=(e-4)$, one easily verifies that the inequality (\ref{almeida}) is equivalent to $r\ge 5$, whence $|H|$ is very ample.
The linear system $$|C|=|\pi^*(eL)-\sum_{i=1}^\delta 2E_i|$$
contains the non-singular model $C$
of the nodal plane curve $E$ and $$H\cdot C=((e-4)l-\sum_{i=1}^\delta e_i)\cdot (el-\sum_{i=1}^\delta 2e_i)=e(e-4)-2\delta=g+r-3.$$
Recall that $\Delta =\{p_1,\cdots ,p_\delta\}\in{\text Sym}^\delta(\PP^2)$ is general and there is a nodal curve $E'\in \Sigma_{e-1, h} $ with nodes at $\Delta$. Therefore we may assume further that $\Delta$ imposes independent conditions on the linear system of curves of any degree $m\ge e-4$; cf. \cite[Exercise 11, p.54]{ACGH} or  \cite[(1.51, p.31)]{H3}.
\vni
Hence by the projection formula, we obtain 
\begin{align*}
h^0(S,\h{O}_S(H))&=h^0(\PP^2, \pi_*(\h{O}_S(H)))=h^0(\PP^2,\pi_*(\pi^*((e-4)L)-\sum_{i=1}^\delta E_i))\\
&=h^0(\PP^2,\h{O}_{\PP^2}((e-4)L-\Delta))=
1+\frac{1}{2}(e-4)(e-1)-\delta\\&=r+1.
\end{align*}
In all, we deduce that the non-singular model $C\subset S$ of a nodal plane curve $E$ of degree $e=g-r+1=8$ in the linear system $|C|=|\pi^*(eL)-\sum_{i=1}^\delta 2E_i|$ is embedded into $\PP^r$ as a curve of degree $d=g+r-3$ by the very ample linear 
system  $|H|=|\pi^*((e-4)L)-\sum_{i=1}^\delta E_i)|.$

\vni
Furthermore,  by our choice of general $\delta$ points $\Delta\subset\PP^2$, which imposes independent conditions on curves of any degree $m\ge e-4$, the linear system $g^2_e=|\pi^*(L)_{|C}|$ cut out on $C$ by lines in $\PP^2$ is complete \cite[Exercise 24, p.57]{ACGH}. Hence it follows that the residual series of $|\pi^*(L)_{|C}|$, which is 
\begin{align*}
|(K_S+C&-\pi^*(L))_{|C}|\\&=|(-\pi^*(3L)+\sum E_i+\pi^*(eL)-2\sum E_i-\pi^*(L))_{|C}|\\
&=|(\pi^*((e-4)L)-\sum E_i)_{|C}|=|H_{|C}|
\end{align*}
is  a complete very ample $g^r_{g+r-3}$ on $C$. Therefore we may deduce that  the family $\h{H}_{\Sigma_{8,g}}$ contains linearly normal smooth curves of degree $d=g+r-3$ and genus $g=r+7$ in $\PP^r$.

\vni
We remark  that the family $\h{H}_{\Sigma_{8,g}}$ is not in the closure of $\h{H}_{4,0}$ or $\h{H}_{(2,3)}$ since a compounded linear series cannot be specialized to a birationally very ample linear series.
Therefore the irreducible family $\h{H}_{\Sigma_{8,g}}$ constitutes a component of $\h{H}^\h{L}_{g+r-3,g,r}$.

\vni By a simple dimension count, the locus $\h{H}_{4,0}$ sitting over the family consisting of the linear series which is residual to $2g^1_4$ (and dominating $\h{M}^1_{g,4}$)
remains a component for every $r\ge 5$ by looking at the inequality;
\begin{equation*}\label{4gonalv}
\lambda (d,g,r)=4g-3r-6=r+22\le\dim\h{M}^1_{g,4}=2g+3=2r+17. 
\end{equation*}  

\vni
We further note that the family of curves $\h{H}_{(2,3)}$ sitting over a component of $\h{X}_{2,3}$ has dimension 
 $$\dim\h{X}_{2,3}+\dim\PP\textrm{GL}(r+1)\lneqq\h{X}(d,g,r)$$ for $5\le r\le 11$ since
$$\dim\mathcal{X}_{2,3}=2g-4=2r+10\lneqq  r+22=\lambda(g+r-3, g,r),$$
 and hence the family $\h{H}_{(2,3)}$ is in the boundary of the component $\h{H}_{\Sigma_{8,g}}$. Hence, for $5\le r\le 11$, we have only two components $\h{H}_{4,0}$, $\h{H}_{\Sigma_{8,g}}\subset\h{H}^\h{L}_{g+r-3,g,r}$. For $12\le r\le 14$, $\h{H}_{(2,3)}$ becomes a separate component different from $\h{H}_{4,0}$ or  $\h{H}_{\Sigma_{8,g}}$ and  there are at least three distinct components $\h{H}_{4,0}$, $\h{H}_{(2,3)}$ and $\h{H}_{\Sigma_{8,g}}$. 
The component $\h{H}_{\Sigma_{8,g}}$ corresponding to the Severi variety (which exists only for $r\le 14$) has the expected number of moduli since $\Sigma_{8,g}$ has the expected number of moduli by \cite[Theorem 4.2]{Sernesi}.
\end{proof}

\begin{rmk} As was mentioned in Remark \ref{r=34} (i) and (ii), the irreducibility of $\h{H}^\h{L}_{g,g,3}=\h{H}_{g,g,3}$ holds for every $g\ge 8$, in particular for $g=r+7=10$.   For $r=4$ and $g=r+7=11$, $\h{H}^\h{L}_{g+1,g,4}=\h{H}_{g+1,g,4}$ is also irreducible. In fact, one checks easily that $$\h{H}_{10,10,3}=\h{H}_{\Sigma_{8.10}} \textrm{~and~} \h{H}_{12,11,4}=\h{H}_{\Sigma_{8.11}}.$$
The argument which we used in the proof of Theorem \ref{g=r+7} (ii) also works for $r=3,4$. Hence both $\h{H}_{g,g,3}$ and $\h{H}_{g+1,g,4}$ have the expected number of moduli for $g=r+7$ and $r=3,4$. We summarize our discussion together with the results  we obtained in Theorem \ref{g=r+7} in the following table. 
\end{rmk} 
\begin{table}[ht]
\caption{Components of $\h{H}^\h{L}_{g+r-3,g,r}$ for $g=r+7$} 
\centering 
\begin{tabular}{c c c c c c} 
\hline\hline 
{\footnotesize Dimension}& {\footnotesize\# of }  & {\footnotesize\# of compts} & {\footnotesize \# of compts} & {\footnotesize \# of compts} & {\footnotesize \# of compts}\\ [0.5ex] 
{\footnotesize of $\PP^r$}  & {\footnotesize compts} & {\footnotesize of expected} & {\footnotesize with expected} & {\footnotesize $\lneqq$ expected} &{\footnotesize $\gneqq$ expected}\\ [0.5ex] 
{}  & {} & {\footnotesize dimension} & {\footnotesize \# of moduli} & {\footnotesize \# of moduli} &  {\footnotesize \# of moduli}\\ [0.5ex] 
\hline 
{\footnotesize $r=3$} & {\footnotesize 1} & {\footnotesize 1} & {\footnotesize 1} & {\footnotesize 0} & {\footnotesize 0}\\
{\footnotesize $r=4$} & {\footnotesize 1} & {\footnotesize 1} & {\footnotesize 1} & {\footnotesize 0}  &{\footnotesize 0}\\ 
{\footnotesize $r=5$} & {\footnotesize 2} & {\footnotesize 2} & {\footnotesize 2} & {\footnotesize 0}  &{\footnotesize 0}\\ 
{\footnotesize $6\le r \le 11$} & {\footnotesize 2}& {\footnotesize 1} & {\footnotesize 1}& {\footnotesize 0} & {\footnotesize 1} \\
{\footnotesize $r=12$} & {\footnotesize $\ge 3$}& {\footnotesize $\ge 2$} & {\footnotesize $\ge 2$} & {\footnotesize $0$} & {\footnotesize 1} \\ 
{\footnotesize $13\le r\le 14$} & {\footnotesize $\ge 3$}  & {\footnotesize $1$}  & {\footnotesize $1$} & {\footnotesize $ 0$}  & {\footnotesize $\ge 2$}  \\
{\footnotesize $r\ge 15$}& {\footnotesize $\ge 2$}  & {\footnotesize $0$} & {\footnotesize $0$} & {\footnotesize $0$} &{\footnotesize $\ge 2$} \\ [1ex] 
\hline 
\end{tabular}
\label{table:nonlin} 
\end{table}

\vni
Recall once again that the existence and the irreducibility of $\h{H}^\h{L}_{g+r-3,g,r}$ is completely known for $r=3, 4$ as we have seen in Remark \ref{r=34}. Therefore we continue to
assume $r\ge 5$. 

\vni\vni
The following Theorem \ref{first} asserts that $\HL{g+r-3,g,r}\neq\emptyset$ for all $g\ge r+5$ and every $r\ge 5$ except for the case $g=r+6$, $r\ge 10$;  recall that  there is no smooth curve with the prescribed genus $g=r+6$ and degree $d=g+r-3$ in $\PP^r$ for any $r\ge 10$ as we showed in Proposition \ref{empty} (i). 
The proof Theorem \ref{first} is based on the following lemma concerning linear series on general $k$-gonal curves. 
\begin{lem}\cite[Proposition 1.1]{CKM}\label{kveryample} Assume $2k-g-2<0$. Let $C$ be a general $k$-gonal curve of genus $g$, $k\ge 2$, $0\le m$, $n\in\mathbb{Z}$ such that 
\begin{equation}\label{veryamplek}
g\ge 2m+n(k-1)
\end{equation}
 and let $D\in C_m$. Assume that there is no $E\in g^1_k$ with $E\le D.$ Then $\dim|ng^1_k+D|=n$.
\end{lem}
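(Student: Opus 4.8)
The plan is to realise the pencil as a covering map and then push everything to $\PP^1$. Let $\pi\colon C\to\PP^1$ be the degree-$k$ morphism defining the gonality pencil, so that $g^1_k=\pi^*\mathcal{O}_{\PP^1}(1)$ and $ng^1_k=\pi^*\mathcal{O}_{\PP^1}(n)$; the hypothesis $2k-g-2<0$ guarantees that this $g^1_k$ is the unique pencil of minimal degree and the only series computing the Clifford index $k-2$ of the general $k$-gonal curve $C$. The lower bound is immediate and requires nothing: the inclusion $\pi^*H^0(\PP^1,\mathcal{O}(n))\subseteq H^0(C,ng^1_k)\subseteq H^0(C,ng^1_k+D)$ exhibits an $(n+1)$-dimensional subspace, so $\dim|ng^1_k+D|\ge n$. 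All of the content lies in the opposite inequality $h^0(ng^1_k+D)\le n+1$.

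I would prove this by induction on $n$, the base case being $n=0$, i.e. $h^0(D)=1$. Here the hypotheses enter decisively: if $h^0(D)\ge 2$ then $D$ would move in a pencil, and by the Brill--Noether theory of general $k$-gonal curves no \emph{primitive} pencil of degree $m$ exists once $2m\le g$, so the moving part of $|D|$ would be built out of $g^1_k$ and a general member of $|D|$ would then contain a fiber $E\in g^1_k$ --- contradicting the assumption $E\not\le D$. Thus $h^0(D)=1$, and in particular $H^0(\mathcal{O}_C(D-g^1_k))=0$ (equivalently $|D-g^1_k|=\emptyset$), since otherwise $D\sim D'+E$ with $D'$ effective and $E\in g^1_k$, again forcing $h^0(D)\ge 2$.

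For the inductive step I would pass to the bundle $\mathcal{E}:=\pi_*\mathcal{O}_C(D)$ on $\PP^1$, a rank-$k$ bundle which splits as $\mathcal{E}\cong\bigoplus_{i=1}^k\mathcal{O}(e_i)$ with $\sum e_i=\deg\mathcal{E}=m-g+1-k$ (comparing Euler characteristics). By the projection formula $h^0(ng^1_k+D)=h^0(\PP^1,\mathcal{E}(n))=\sum_i\max(0,e_i+n+1)$, so the claim becomes a numerical statement about the $e_i$. The vanishing $H^0(\mathcal{E}(-1))=0$ forces every $e_i\le 0$, while $h^0(D)=1=\sum_i\max(0,e_i+1)$ forces exactly one $e_i$ to equal $0$ and the remaining $k-1$ to satisfy $e_i\le -1$. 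Granting in addition that these $k-1$ summands satisfy $e_i\le -n-1$, one computes directly $\sum_i\max(0,e_i+n+1)=(n+1)+0=n+1$, as desired. (Equivalently one may keep the classical induction via the base-point-free pencil trick, where the step reduces to surjectivity of $H^0(g^1_k)\otimes H^0((n-1)g^1_k+D)\to H^0(ng^1_k+D)$, which on $\PP^1$ fails exactly when some $e_i=-n$.)

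The hard part --- and the main obstacle --- is precisely the estimate $e_i\le -n-1$ for the $k-1$ nontrivial summands, uniformly over \emph{all} admissible $D$ rather than a general one. This is where both the generality of $C$ and the full strength of $g\ge 2m+n(k-1)$ are consumed. For the general $k$-gonal curve the scrollar bundle $\pi_*\mathcal{O}_C\cong\mathcal{O}\oplus\bigoplus_{j=1}^{k-1}\mathcal{O}(-a_j)$ is as balanced as possible, and $g\ge n(k-1)$ then forces $a_j\ge n+1$ for every $j$. I would realise $\mathcal{E}$ as the positive elementary modification of this balanced bundle along the length-$m$ torsion sheaf $\pi_*\mathcal{O}_D(D)$, and bound how far a modification of length $m$ can raise a summand; the gap between the bare degree bound $g\ge m+n(k-1)$ needed in the balanced case and the stated hypothesis $g\ge 2m+n(k-1)$ is exactly the slack that absorbs the worst admissible modification. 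Controlling this modification for every $D$ with $E\not\le D$ --- equivalently, ruling out a jump $e_i\in\{-n,\dots,-1\}$ produced by a special position of the points of $D$ --- is the step I expect to require the most care.
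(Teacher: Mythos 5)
The paper itself offers no proof of this lemma: it is quoted verbatim from \cite[Proposition 1.1]{CKM}, so your attempt can only be judged on its own merits, and as it stands it is a program rather than a proof. Your reduction to the splitting type of $\mathcal{E}=\pi_*\mathcal{O}_C(D)=\bigoplus\mathcal{O}(e_i)$ is correct and clean: the projection-formula identity $h^0(ng^1_k+D)=h^0(\mathbb{P}^1,\mathcal{E}(n))$, the vanishing $h^0(D-g^1_k)=0$ forcing $e_i\le 0$, and the normalization that exactly one $e_i$ equals $0$ are all fine. But this packaging shows that the \emph{entire} content of the lemma is the estimate $e_i\le -n-1$ for the $k-1$ negative summands, uniformly over every admissible $D$, and you explicitly leave that step open. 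Moreover your own accounting indicates the proposed mechanism cannot close the gap: an elementary modification along a length-$m$ torsion sheaf can a priori concentrate its full degree $m$ on a single summand of the balanced bundle $\pi_*\mathcal{O}_C$, whose negative invariants are roughly $g/(k-1)+1$, so excluding $e_i\ge -n$ by ``balanced value minus worst-case raise'' would need $g\gtrsim (m+n)(k-1)$, which for $k\ge 4$ is strictly stronger than the available hypothesis $g\ge 2m+n(k-1)$. The assumption that no $E\in g^1_k$ satisfies $E\le D$ only forbids modification along a \emph{full} fiber; it does not prevent $m$ points of $D$ lying over $m$ distinct points of $\mathbb{P}^1$ from each raising the same summand by $1$, and since $D$ is an arbitrary divisor subject only to the no-fiber condition, generality of $D$ is not available to rule this out. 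So the decisive estimate is missing, and the shape of the hypothesis ($2m$ independent of $k$) suggests the true mechanism is a secant-type/independent-conditions argument rather than scrollar bookkeeping.

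There is also a gap already in your base case $n=0$. The assertion that $h^0(D)\ge 2$ would force the moving part of $|D|$ to be ``built out of $g^1_k$'' once $2m\le g$ is exactly the kind of structure result on pencils of general $k$-gonal curves that the cited paper \cite{CKM} (whose subject is the primitive length) is devoted to, so invoking it is close to circular and in any case unproved in your sketch. What the Clifford-index results actually available in this paper's toolkit (\cite{B}, \cite{KKS}) give is only that a moving pencil on a general $k$-gonal curve has degree at least $k$ and that the Clifford index is computed solely by the $g^1_k$; neither statement implies that a degree-$m$ divisor with $h^0(D)\ge 2$ must contain a fiber of $\pi$, since a base-point-free pencil of degree strictly between $k$ and $g/2$ is not excluded by Clifford-index considerations alone. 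Thus both the base case and the inductive heart of your argument remain to be supplied.
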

\begin{thm}\label{first} Given $r\ge 5$, 
we set
\begin{eqnarray*}\Upsilon =&\{(g+r-3,g,r)| ~g\ge r+5, g\neq r+6, r\ge 5\}\\
&\cup \{(2r+3,r+6,r)| ~ 5\le r\le 9\}\subset\mathbb{N}^{\oplus 3}.
\end{eqnarray*}
Then $$\HL{g+r-3,g,r}\neq\emptyset~ \textrm{ ~~if and only if~ }~ (g+r-3,g,r)\in\Upsilon .$$In other words, 
given $r\ge 5$ and for any $g\ge r+5$ there exists a smooth, non-degenerate and linearly normal curve of genus $g$ 
and of degree $d=g+r-3$ in $\PP^r$ unless $g=r+6$ and $r\ge 10$. 
\end{thm}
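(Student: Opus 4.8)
The proof splits along the two implications, and I would organize it so that nearly everything reduces to results already in hand, isolating a single construction as the new content. For the implication $(g+r-3,g,r)\notin\Upsilon\Rightarrow\HL{g+r-3,g,r}=\emptyset$ I would simply combine the two emptiness statements already available: $\HL{g+r-3,g,r}=\emptyset$ for $g\le r+4$ by the Castelnuovo genus bound, and $\HL{g+r-3,g,r}=\emptyset$ for $g=r+6,\ r\ge 10$ by Proposition~\ref{empty}(i); these are precisely the triples excluded from $\Upsilon$. For the converse, the triples with $g\in\{r+5,r+6,r+7\}$ are settled by Remark~\ref{extremal}, Proposition~\ref{empty}(ii) and Theorem~\ref{g=r+7}, while those with $g\ge 3r+3$ lie in the Brill--Noether range $\rho(g+r-3,g,r)=g-3r-3\ge 0$, where nonemptiness is supplied by the principal component (Remark~\ref{principal}(ii)). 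Thus the whole remaining task is to exhibit, for every $r\ge 5$ and every $g$ with $r+8\le g\le 3r+2$, one smooth linearly normal curve of degree $g+r-3$ in $\PP^r$.

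My plan for this range is to prescribe the residual of the hyperplane series on a general $k$-gonal curve. A complete very ample $\h{D}=g^r_{g+r-3}$ has residual $\h{E}=|K_C-\h{D}|$ of degree $g-r+1$ and dimension $2$, so it suffices to produce a smooth curve $C$ of genus $g$ carrying a complete net $\h{E}=g^2_{g-r+1}$ for which $L:=|K_C-\h{E}|$ is very ample; then $L$ is a complete very ample $g^r_{g+r-3}$ embedding $C$ as a non-degenerate linearly normal curve, whence $\HL{g+r-3,g,r}\ne\emptyset$. I would take $C$ general $k$-gonal of genus $g$ with $k=\lfloor\tfrac12(g-r+1)\rfloor$ and set $\h{E}=|2g^1_k+D|$ for a general effective $D$ of degree $m=g-r+1-2k\in\{0,1\}$. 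A direct check, using $r\ge 5$ and $r+8\le g\le 3r+2$, gives $k\ge 3$ together with both numerical inequalities $g\le 2r+2k-4$ and $g\le 2r+k+1$, the second becoming tight as $g$ approaches the Brill--Noether threshold $3r+3$; this is the structural reason the method runs out precisely where the principal component takes over. Lemma~\ref{kveryample} (with $n=2$) then gives $\dim|2g^1_k+D|=2$, so $\h{E}$ is an honest complete $g^2_{g-r+1}$ and $h^0(L)=r+1$ by Riemann--Roch.

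The heart of the matter, and the step I expect to be the main obstacle, is the very ampleness of $L$, which by Riemann--Roch is equivalent to $h^0(\h{E}+u+v)=h^0(\h{E})=3$ for every $u+v\in C_2$. For a generic pair $u,v$ -- one for which no divisor of $g^1_k$ lies below $D+u+v$ -- this follows by applying Lemma~\ref{kveryample} to $2g^1_k+(D+u+v)$, whose hypothesis is exactly $g\le 2r+2k-4$. The delicate case is a special pair with $u+v\le E_0$ for some $E_0\in g^1_k$: here I would write $\h{E}+u+v\sim 3g^1_k+D-F$ with $F=E_0-u-v\ge 0$ of degree $k-2$, use Lemma~\ref{kveryample} (with $n=3$, where the constraint $g\le 2r+k+1$ is consumed) to obtain $\dim|3g^1_k+D|=\dim|3g^1_k|=3$, so that the fixed part of $|3g^1_k+D|$ is exactly $D$ while $3g^1_k$ is base-point-free, and then, since $F$ is disjoint from $D$ and hence not contained in that base locus, combine the forced inequality $h^0(\h{E}+u+v)\ge h^0(\h{E})=3$ with $\dim|3g^1_k+D-F|\le 2$ to pin $h^0(\h{E}+u+v)$ down to $3$. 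The genuinely fiddly bookkeeping -- $u$ or $v$ meeting $D$, the tangential case $u=v$, or a $g^1_k$-divisor through $u+v$ that also meets $D$ -- is where the real work sits; it is dispatched using the genericity of $D$ (nearly harmless here, since $\deg D\le 1$) exactly in the spirit of the verification in Proposition~\ref{empty}(i).

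Granting very ampleness, $L=|K_C-\h{E}|$ embeds the general $k$-gonal curve $C$ as a smooth, non-degenerate, linearly normal curve of degree $g+r-3$ and genus $g$ in $\PP^r$, so $\HL{g+r-3,g,r}\ne\emptyset$ for all $r+8\le g\le 3r+2$. Together with the low-genus cases and the Brill--Noether range this yields nonemptiness for every $(g+r-3,g,r)\in\Upsilon$, and with the two emptiness statements it gives the claimed equivalence. The only real subtlety throughout is the uniform control of $\dim|2g^1_k+D+u+v|$ for \emph{all} pairs $u,v$, and it is the two inequalities furnished by Lemma~\ref{kveryample} that simultaneously fix the admissible gonality $k$ and make the construction degenerate exactly at $g=3r+3$.
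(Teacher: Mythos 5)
Your proposal is correct and yields the theorem, but it takes a genuinely different and more roundabout route than the paper, whose proof is a single uniform construction. After disposing of $g=r+6$ via Proposition~\ref{empty}, the paper writes $g-r+1=2k$ with $k\ge 3$ or $g-r+1=2k+1$ with $k\ge 4$, takes a general $k$-gonal curve $C$, and shows that $|K_C-2g^1_k|$ (resp.\ $|K_C-2g^1_k-q|$ for a general point $q$) is a complete very ample $g^r_{g+r-3}$ by applying Lemma~\ref{kveryample} with $n=2$ to the divisor $D+u+v$ of degree at most $3$: since the lemma's hypothesis asks that no \emph{full} degree-$k$ divisor $E\in g^1_k$ satisfy $E\le D$, and $\deg(D+u+v)\le 3<k$, the hypothesis is automatic for \emph{every} pair $u+v\in C_2$ -- including pairs lying in a gonal fiber -- while the numerical condition $g\ge 2m+n(k-1)$ reduces to $r\ge 3$ (resp.\ $r\ge 4$). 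So one construction covers all $g\ge r+5$, $g\ne r+6$, with no upper bound on $g$, no appeal to the principal component, and no separate treatment of $g=r+5,r+7$ (your peeling off of those cases via Remark~\ref{extremal} and Theorem~\ref{g=r+7} is sound but unnecessary). Your ``delicate case'' $u+v\le E_0\in g^1_k$ is in fact vacuous: you have read the hypothesis of Lemma~\ref{kveryample} as excluding partial overlap of $D+u+v$ with gonal fibers, and the resulting $n=3$ workaround is precisely what consumes the constraint $g\le 3r+3-m$ and forces your artificial ceiling $g\le 3r+2$, patched by Remark~\ref{principal}(ii) in the Brill--Noether range; note also that your other inequality $g\le 2r+2k-4$ becomes, after substituting $2k=g-r+1-m$, just $r\ge m+3$, so it constrains nothing, and the claim that the method ``runs out exactly at $g=3r+3$'' is an artifact of the detour rather than structural. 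Within that superfluous $n=3$ argument there is one small inaccuracy: $F=E_0-u-v$ need not be disjoint from $D$ (the gonal fiber through $u,v$ may pass through $q$), though this is repairable since $\deg F=k-2\ge 2$ and a general fiber is reduced, so $F$ still contains a point off the base locus $D$. In sum, your decomposition buys maximal reuse of statements already proved; the paper's reading of Lemma~\ref{kveryample} buys a two-line very-ampleness check, uniformity across the whole range including the Brill--Noether region, and a transparent explanation of why $g=r+6$ (which would force $k=3$ in the odd case) is the unique exception.
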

\begin{proof} Since the existence (non-existence, respectively) was shown for $g=r+6$ and $5\le r\le 9$ ($r\ge 10$, respectively) we may assume $g\neq r+6$. We only need to exhibit a very ample and complete linear series $\h{D}=g^r_{g+r-3}$ on a certain smooth curve of genus $g$. Since $g\neq r+6$, we have either 

(1) $g-r+1=2k$ for some $k\ge 3$ \qquad or 

(2) $g-r+1=2k+1$ for some $k\ge 4$.
\begin{enumerate}
\item[(1)] Suppose $g-r+1=2k$ for some $k\ge 3$: Note that $$\rho (k,g,1)=2k-g-2=-r-1\lneqq 0$$ and we consider a general $k$-gonal curve $C$. Taking  $n=2, m=2$  in Lemma \ref{kveryample}, the numerical condition (\ref{veryamplek}) is satisfied. Furthermore, there is no $E\in g^1_k$ with $E\le D$ since $k\ge 3$. Hence we have $\dim|2g^1_k+D|=2$ for any $D\in C_2$ by Lemma \ref{kveryample} and therefore $$|K_C-2g^1_k|=g^r_{g+r-3}$$ is very ample.
\item[(2)] Suppose $g-r+1=2k+1$ for some $k\ge 4$: Since  $$\rho (k,g,1)=2k-g-2=-r-2\lneqq 0,$$ we again may consider a general $k$-gonal curve $C$. For a general 
$q\in C$ and for any $r+s\in C_2$, we take $D=q+r+s$, $m=3$ and $n=2$ in Lemma \ref{kveryample}. Again the numerical condition  (\ref{veryamplek}) holds and there is no $E\in g^1_k$ with $E\le D$ just because $k\ge 4$. Hence $\dim |2g^1_k+D|=2$ which implies that $$|K_C-2g^1_k-q|=g^{r}_{2g-2-2k-1}=g^r_{g+r-3}$$ is very ample. 
\end{enumerate}\end{proof} 

\begin{rmk}\label{kdelta}
\begin{enumerate}
\item[(i)] If $k=3$ and $g-r+1=2k$ in the proof (1) of Theorem \ref{first},  i.e. $g=r+5$, we remark that curves detected by our proof are trigonal (and extremal) curves 
lying on a rational normal scroll $S\subset\PP^r$. Such trigonal curves are in the linear system $|3H-(r-5)L|$ where $H$ is a hyperplane section and $L$ is one of the rulings of $S$ whereas the trigonal pencil is cut out  by the rulings of the scroll $S$. Through elementary dimension count, one may verify easily that this family of trigonal curves is indeed dense in the corresponding component of the Hilbert scheme.
\item[(ii)] Let 
\begin{eqnarray*}
\h{F}&:=&\{|K_C-2g^1_k|; C\in\h{M}^1_{g,k}\} ~\textrm{ if }~ g-r+1=2k \textrm{ \quad~~or}\\
&:=&\{|K_C-2g^1_k-q|; C\in\h{M}^1_{g,k}\} ~\textrm{ if }~ g-r+1=2k+1
\end{eqnarray*}
be the family of very ample linear series in $\widetilde{\h{G}}_\h{L}\subset\h{G}^r_{g+r-3}$ over $\h{M}^1_{g,k}$ which we constructed in the proof of Theorem \ref{first}. For $g\le 2r+2$ and $g\neq r+6$, this family $\h{F}$ has dimension equal to
\begin{eqnarray*}
\dim\h{M}^1_{g,k}&=&2g+2k-5=3g-4-r ~\textrm{ if }~ g-r+1=2k\quad\text{or} \\
\dim\h{M}^1_{g,k}+1&=&2g+2k-4=3g-4-r ~\textrm{ if }~ g-r+1=2k+1. 
\end{eqnarray*}
In both cases, we have 
$$\lambda(g+r-3,g,r)=4g-3r-6\le \dim\h{F}=3g-4-r$$ by $g\le 2r+2$. 
\item[(iii)] We also remark that  $$\dim\h{F}=\dim\h{F}^\vee=3g-4-r$$ is same as the maximal possible dimension of an irreducible component of 
$\h{W}^2_{g-r+1}$ whose general element is compounded; cf. Proposition \ref{wrdbd} (c).  Here we use the notation $\h{F}^\vee$ for the family $\{\h{E}^\vee| \h{E}\in \h{F}\}$, where $\h{E}^\vee$ is the residual series of $\h{E}$.
\item[(iv)]
Hence this family $\h{F}$
is not contained in any other irreducible family $\h{F}'\subset\widetilde{\h{G}}_\h{L}\subset\h{G}^r_{g+r-3}$ of strictly bigger dimension such that the residual series of a general element of $\h{F}'$ is compounded. We will see in the next theorem that the family $\h{F}$ as well as some other families of similar kind give rise to several distinct components
of $\h{H}^\h{L}_{g+r-3,g,r}$.
\end{enumerate}
\end{rmk}

\vni
Motivated by Theorem \ref{first}, we show the reducibility of $\mathcal{H}^\mathcal{L}_{g+r-3,g,r}$ for every genus $g$ in the range $r+7\le g\le 2r+2$.  This generalizes the reducible example in \cite[Example 2.7]{JPAA} and shows that the bound $g\ge 2r+3$   for the irreducibility of $\mathcal{H}^\mathcal{L} _{g+r-3,g,r}$ is sharp for every $r\ge 5$; cf. Theorem \ref{JPA} (4).
\begin{thm}\label{reducible} For $r\ge 5$,  $\mathcal{H}^\mathcal{L}_{g+r-3,g,r}$ is reducible for every $g$ in the range $r+7\le g\le 2r+2$ having  component(s) dominating the loci $\h{M}^1_{g,k}$ for every $4\le k\le
[\frac{g-r+1}{2}]$.
\end{thm}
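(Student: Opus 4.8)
The plan is to prove reducibility by exhibiting, for each admissible gonality $k$ in the range $4\le k\le[\frac{g-r+1}{2}]$, a distinct irreducible component of $\h{H}^\h{L}_{g+r-3,g,r}$ dominating the $k$-gonal locus $\h{M}^1_{g,k}$, and then separating these loci by a dimension (number of moduli) argument together with the fact that gonality is constant on a component. First I would construct, for each such $k$, a very ample and complete series $\h{D}=g^r_{g+r-3}$ on a general $k$-gonal curve exactly as in the proof of Theorem \ref{first}: take $\h{D}=|K_C-2g^1_k|$ when $g-r+1=2k$ and $\h{D}=|K_C-2g^1_k-q|$ when $g-r+1=2k+1$, using Lemma \ref{kveryample} to guarantee very ampleness. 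More generally, for intermediate $k$ with $2k<g-r+1$ one chooses $\h{D}$ to be the residual of $2g^1_k$ together with an appropriate base divisor so that $\deg\h{D}=g+r-3$ and $\dim\h{D}=r$; the Clifford-index input from \cite{B} or \cite{KKS} ensures that on a general $k$-gonal curve the only special, compounded pencils come from the unique $g^1_k$, which is what pins down the gonality of the general member.

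Next I would set up the family $\h{F}_k\subset\widetilde{\h{G}}_\h{L}\subset\h{G}^r_{g+r-3}$ of such series over $\h{M}^1_{g,k}$ (as in Remark \ref{kdelta}), let $\h{H}_{k}\subset\h{H}^\h{L}_{g+r-3,g,r}$ be the corresponding $\PP\mathrm{GL}(r+1)$-bundle, and compute its dimension. Since the construction makes $\h{F}_k$ birational to (an open subset of) $\h{M}^1_{g,k}$, the image of $\h{H}_k$ under the functorial map to $\h{M}_g$ is precisely $\h{M}^1_{g,k}$, of dimension $2g+2k-5$. The key numerical check is that this family is not contained in any larger family inside $\widetilde{\h{G}}_\h{L}$: by Proposition \ref{wrdbd}(c), any component of $\h{W}^2_{g-r+1}$ with compounded general member has dimension at most $2g-1+(g-r+1)-4=3g-r-4$, and this bound is met only by the family over the maximal-gonality locus; for each fixed $k$ the residual series $\h{E}=\h{D}^\vee$ is a $k$-compounded $g^2_{g-r+1}$, so the $k$-gonal families are mutually non-comparable as $k$ varies. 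This yields, by Proposition \ref{facts}(1) and the lower bound $\dim\h{G}\ge\lambda(g+r-3,g,r)$, that each $\h{F}_k$ is an entire component once we verify $\dim\h{F}_k\ge\lambda(g+r-3,g,r)=4g-3r-6$; indeed the computation in Remark \ref{kdelta}(ii) gives $\dim\h{F}_k=3g-4-r\ge 4g-3r-6$ precisely because $g\le 2r+2$.

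The separation of the various $\h{H}_k$ into distinct components is then a semicontinuity argument: gonality is lower-semicontinuous, so a general member of $\h{H}_k$ has gonality exactly $k$ and cannot specialize to (nor be a specialization of) a general member of $\h{H}_{k'}$ for $k'\neq k$ unless one family lies in the closure of the other; since the images in $\h{M}_g$ are the distinct irreducible loci $\h{M}^1_{g,k}$ and $\h{M}^1_{g,k'}$ (neither contained in the other for $k\neq k'$ in this gonality range), the families $\h{H}_k$ are pairwise distinct irreducible components. Because the range $r+7\le g\le 2r+2$ guarantees $[\frac{g-r+1}{2}]\ge 4$ and hence at least two admissible values of $k$ (for instance $k=4$ and $k=[\frac{g-r+1}{2}]$), we obtain at least two distinct components and therefore reducibility.

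The main obstacle I anticipate is the dimension bookkeeping that shows each $\h{F}_k$ is genuinely a \emph{component} rather than a proper subvariety of a larger component whose general element is birationally very ample (as happens in Theorem \ref{g=r+7} with $\h{H}_{\Sigma_{8,g}}$). The crux is the inequality $\dim\h{F}_k=3g-4-r\ge\lambda(g+r-3,g,r)=4g-3r-6$, valid exactly on $g\le 2r+2$, which forces $\h{F}_k$ to have at least the minimal dimension of a component of $\h{G}^r_{g+r-3}$; combined with the Proposition \ref{wrdbd}(c) upper bound for compounded families, this traps $\h{F}_k$ as a full component and simultaneously prevents it from being swallowed by a birationally-very-ample family. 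Verifying very ampleness of the residual series for each intermediate $k$ via Lemma \ref{kveryample}, and checking that no $\h{M}^1_{g,k}$ is contained in the closure of another in this range, are the remaining technical points, but they are routine once the gonality-rigidity of the general $k$-gonal curve is invoked.
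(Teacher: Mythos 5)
Your construction of the gonal families and the dimension trap that makes each of them a genuine component is essentially the paper's argument: the paper forms $\h{F}_{k,\delta}=(2\h{G}^1_k+\h{W}_\delta)^\vee$ with $\delta=g-r+1-2k$, proves very ampleness of $|K_C-2g^1_k-\Delta|$ via Lemma \ref{kveryample} (choosing $\Delta$ so that no two of its points lie in a common fiber of the gonal pencil, a point you gloss over with ``appropriate base divisor''), computes $\dim\h{F}_{k,\delta}=3g-4-r\ge\lambda(g+r-3,g,r)$ from $g\le 2r+2$, and excludes absorption into a larger component exactly as you do, via Proposition \ref{wrdbd}(b) for a birationally very ample general residual and via the Proposition \ref{wrdbd}(c) bound for a compounded one. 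But your final step contains a genuine gap: you claim the range $r+7\le g\le 2r+2$ always supplies at least two admissible values of $k$, ``for instance $k=4$ and $k=[\frac{g-r+1}{2}]$.'' When $g=r+7$ one has $g-r+1=8$, and when $g=r+8$ one has $g-r+1=9$; in both cases $[\frac{g-r+1}{2}]=4$, so the two values you name coincide and the gonal construction produces only the single component $\h{H}_{4,0}$ (resp.\ $\h{H}_{4,1}$). For these two genera reducibility does \emph{not} follow from your argument. The paper closes exactly this hole by entirely different constructions: for $g=r+7$ it invokes Theorem \ref{g=r+7}, whose second component is either the Severi-variety component $\h{H}_{\Sigma_{8,g}}$ (built from nodal plane octics via Arbarello--Cornalba \cite{AC1} and the d'Almeida--Hirschowitz very ampleness criterion \cite{Coppens} on a blown-up plane, for $5\le r\le 14$) or the double-cover component $\h{H}_{(2,3)}$ over $\h{X}_{2,3}$ (for $r\ge 15$, using the projection-formula/very-ampleness analysis for double covers of genus-$3$ curves); for $g=r+8$ it constructs $\h{H}_{\Sigma_{9,g}}$ for $6\le r\le 20$ and $\widetilde{\h{H}}_{(2,3),1}$ for $r\ge 13$, and these ranges overlap so that a second component exists for every admissible $r$. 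None of this is a routine extension of the gonality framework, so your proposal proves the theorem only for $r+9\le g\le 2r+2$.

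A secondary inaccuracy: you assert that the compounded bound $\dim\h{W}\le 3g-r-4$ of Proposition \ref{wrdbd}(c) ``is met only by the family over the maximal-gonality locus.'' In fact every one of the families attains it: $\dim\h{F}_{k,\delta}=\dim\h{G}^1_k+\delta=(2g+2k-5)+(g-r+1-2k)=3g-4-r$ independently of $k$ (Remark \ref{kdelta}(ii)), and it is precisely this equality for \emph{all} $k$ that prevents any $\h{F}_{k,\delta}$ from being properly contained in a larger compounded family. As you state it, the lower-gonality families would have strictly smaller dimension and your non-absorption argument would collapse for them; the correct computation rescues the step, and your semicontinuity-of-gonality separation of the resulting components is then sound and matches the paper.
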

\begin{proof} 
We retain all the notations used in the proof of Theorem \ref{g+r-2}.
Since $g\ge r+7$, we have $e:=g-r+1\ge 8$. We first fix an integer $k\ge 4$ such that $e=g-r+1\ge 2k$ and set $\delta:=g-r+1-2k\ge 0$.  On a general $k$-gonal curve $C$ with a unique $g^1_k$, $$|K_C-2g^1_k|=g^{g-2k+1}_{2g-2-2k}$$ is very ample by \cite[Corollary 3.3]{Keem}.  Furthermore, we claim that for a general choice of $\Delta\in C_\delta=C_{g-r+1-2k}$, the complete linear series  
$$|K_C-2g^1_k-\Delta|=g^{g-2k+1-\delta}_{2g-2-2k-\delta}=g^r_{g+r-3}$$ is very ample as long as $k\ge 4$. We take $$\Delta=r_1+\cdots +r_\delta\in C_\delta$$ such that 
no pair $\{ r_i, r_j\}$ lies on the same fiber of the $k$-sheeted covering over $\PP^1$. Upon making  a choice of such $\Delta$, for an arbitrary choice $s+t\in C_2$, we take $ D=\Delta +s+t$,  $n=2$ and $m=\delta +2$ in Lemma \ref{kveryample}.  
By the assumption $g\le 2r+2$, we see that the numerical assumption (\ref{veryamplek}) in Lemma \ref{kveryample} ( $g\ge 2m+n(k-1)$) is satisfied.  By our choice of $\Delta$, we have  $\deg\gcd (D,E)\le 3$ for any $E\in g^1_k$ and hence there is no $E\in g^1_k$ with $E\le D$ as long as  $k\ge 4$.  Therefore it follows that
$$\dim|2g^1_k+\Delta+s+t|=2, $$
for any $s+t\in C_2$ implying $|K_C-2g^1_k-\Delta|$ is indeed very ample. 
Let $$\h{F}_{k,\delta}:=(2\h{G}^1_k+\h{W}_\delta)^\vee\subset\h{G}^r_{g+r-3}$$ 
 be the locus consisting of residual series of nets $\h{E}\in\h{W}^2_{g-r+1}$ of the form $$\h{E}=|2g^1_k+\Delta |,~ \Delta\in C_\delta$$ on a general $k$-gonal curve $C\in\h{M}^1_{g,k}$.
 The following inequality holds by the assumption $g\le 2r+2$;
\begin{equation}\label{kcom}
\begin{split}\lambda(g+r-3,g,r)&=4g-3r-6\le\dim\h{F}_{k,\delta}=\dim(2\h{G}^1_k+\h{W}_\delta)\\&=\dim\h{G}^1_k\underset{\h{M}^1_{g,k}}\times\h{W}_\delta\\&=\dim\h{G}^1_{k}+\delta\\&=3g-3+\rho(k,g,1)+\delta\\
&=3g-3+(2k-g-2)+(g-r+1-2k)\\
&=3g-4-r.
\end{split}
\end{equation}
It is a priori possible that these loci $\h{F}_{k,\delta}$ (corresponding to each $k\ge 4$ and $\delta$ such that $2k+\delta=g-r+1$) might be in the boundary of some other component of bigger dimension. However this would be impossible for which we argue as follows. Let $\h{G}\subset\h{G}^r_{g+r-3}$ be a component properly containing the locus $\h{F}_{k,\delta}$.
 If the base-point-free part of the  residual series $\h{E}$  of a general $\h{D}\in\h{G}$ is birationally very ample,  
by an elementary dimension count using  Proposition \ref{wrdbd} (b)
$$\lambda (g+r-3,g,r)\le \dim \h{W}^\vee=\dim\h{G}\le 3(g-r+1)+g-9-2b$$ implying $b=0$ and $\dim\h{G}=\lambda(g+r-3, g,r)$ where $b$ is the degree of the base locus of $\h{E}$. Hence we have 
$$\dim\h{F}_{k,\delta}=\dim (2\h{G}^1_k+\h{W}_\delta)^\vee =3g-4-r \lneqq \dim\h{G}=4g-3r-6,$$ which is not compatible with above inequality (\ref{kcom}) or the genus assumption $g\le 2r+2$.

\vni 
The remaining possibility is that the residual series $\h{E}$  of a general $\h{D}\in\h{G}$ is compounded and induces an $n$-sheeted covering $C\stackrel{\pi}\rightarrow E$ onto an irrational curve $E$ of genus $h\ge 1$. However this is not possible by Remark \ref{kdelta} (iv).
Therefore we may deduce that the irreducible locus $\h{F}_{k,\delta}=(2\h{G}^1_k+\h{W}_\delta)^\vee$ is indeed dense in a component 
$\h{G}\subset\widetilde{\h{G}}_\h{L}$,
which
gives rise to a component of $\h{H}^\h{L}_{g+r-3, g,r}$ of  dimension, 
$$\dim\h{M}^1_{g,k}+\delta+\dim\PP\textrm{GL}(r+1)=3g-4-r+\dim\PP\textrm{GL}(r+1)$$
as a $\PP\textrm{GL}(r+1)$-bundle over the irreducible locus $\h{F}_{k,\delta}$.
\vni In all, we conclude that there is a component $\h{H}_{k,\delta}\subset \h{H}^\h{L}_{g+r-3}$ sitting over the component $\h{F}_{k,\delta}\subset\widetilde{\h{G}}_\h{L}$ dominating 
$\h{M}^1_{g,k}$ of equal dimension $3g-4-r$ for each $k$'s in the range $4\le k\le [\frac{g-r+1}{2}]$;
$$\h{H}_{k,\delta} \dasharrow \h{F}_{k,\delta}\dasharrow \h{M}^1_{g,k}\subset\h{M}_g.$$
 \vni
\begin{enumerate}
\item[(i)] For $g=r+7$, $\h{H}^\h{L}_{g+r-3,g,r}$ is reducible by Theorem \ref{g=r+7}.
\vni
\item[(ii)] 
For $r+9\le g\le 2r+2$, there are at least $l(r,g):=[\frac{g-r+1}{2}]-3\ge 2$ components $\h{H}_{k,\delta}$'s for each $k$ in the range
$8\le 2k\le g-r+1$.
\vni
\item[(iii)] We assume $g=r+8$ and $r\ge 6$. Since we are working in the range $r+5\le g\le 2r+2$, we have $r\ge 6$ if $g=r+8\le 2r+2$.
\vni
\item[(a)]
If the residual series $\h{E}=g^2_{g-r+1}=g^2_9\in\h{W}^\vee$ of  a general element $\h{D}\in\h{G}\subset\widetilde{\h{G}}_\h{L}$ is compounded and induces a $4$-sheeted covering onto $\PP^1$,  we have already seen that there is a component $\h{H}_{4,1}\subset\h{H}^\h{L}_{g+r-3,g,r}$ sitting over the irreducible family $\h{F}_{4,1}=(2\h{G}^1_4+\h{W}_1)^\vee$ dominating $\h{M}^1_{g,4}$.
\vni 
\item[(b)]
We may add another specific irreducible family $\h{F}_{(2,3),1}\subset\h{G}\subset\widetilde{\h{G}}_\h{L}$ by claiming that for a double cover $C\stackrel{\pi}{\rightarrow}E$ onto a non-hyperelliptic curve $E$ genus $h=3$, the linear series 
$$|K_C-\pi^*(K_E)-q|$$ is very ample for a general $q\in C$ if $g=r+8\ge 21$, i.e. $r\ge 13$. We let $\h{E}=|\pi^*(K_E)+q|=g^2_9$ and assume the existence of  $D\in C_2$ such that $$\dim|\h{E}+D|=\dim|\pi^*(K_E)+q+D|\ge \dim\h{E}+1=3. $$  
Recall that $|K_C-\pi^*(K_E)|$ is very ample as we have seen in the course of the proof of Theorem \ref{g=r+7} and hence  $|\pi^*(K_E)+D|=g^2_{10}$.
Therefore $$|\pi^*(K_E)+q+D|=g^3_{11}$$
 is base-point-free and birationally very ample (but not  very ample)  inducing a birational morphism onto a space curve $\tilde{C}\subset\PP^3$.
Note that \[g=r+8\le p_a(\tilde{C})\le \pi (11,3)=20\]
contrary to the assumption $g=r+8\ge 21$. 
Therefore it follows that $|K_C-\pi^*(K_E)-q|$ is very ample finishing the proof of the claim.
By the usual dimension count, the family $\h{F}_{(2,3),1}\subset\h{G}\subset\widetilde{\h{G}}_\h{L}$ consisting of very ample linear series of the form $|K_C-\pi^*(K_E)-q|$ over a component of  $\h{X}_{2,3}\subset\h{M}_g$ has dimension
$$\dim\h{F}_{(2,3),1}=\dim\h{X}_{2,3}+1=2g-3\ge 4g-3r-6=\lambda (d,g,r);$$ where the inequality holds by the assumption $g=r+8\ge 21$. 
We see that $\h{F}_{(2,3),1}$ is not in the boundary of $\h{F}_{4,1}$ by (lower) semi-continuity of gonality. Hence for $g=r+8\ge 21$ there are at least two components of $\widetilde{\h{G}}_\h{L}\subset\h{G}^r_{g+r-3}$; $\h{F}_{4,1}$ dominating $\h{M}^1_{g,4}$, another one containing the family $\h{F}_{(2,3),1}$.
 We denote\footnote{ The notation $\h{H}_{(2,3),1}$ is used in order to avoid confusion with $\h{H}_{(2,3)}$ which were used in the proof of Theorem \ref{g=r+7}.} by $\h{H}_{(2,3),1}$, the irreducible family in $\HL{2r+5,r+8,r}$ corresponding to the family $\h{F}_{(2,3),1}$ and by $\widetilde{\h{H}}_{(2,3)1}$, the component containing $\h{H}_{(2,3),1}$.
\vni
\item[(c)]
For $6\le r\le 12$ (in fact for $6\le r\le 20$) one may come up with another component $\h{H}_{\Sigma_{9,g}}\subset\h{H}^\h{L}_{g+r-3,g,r}$ corresponding to the Severi 
variety $\Sigma_{9,g}$. Indeed we may copy the proof  Theorem \ref{g=r+7} (ii) with only trivial modifications as follows. In the current situation, we have 
$$e=\deg\h{E}=\deg|K_C-\h{D}|=9$$ hence the equality (\ref{delta}) in the proof (ii) of Theorem \ref{g=r+7} becomes
\begin{equation}\label{e=9}
 \delta=\frac{(e-1)(e-2)}{2}-g=28-(r+8)=20-r
 \end{equation}  
 Then for a general  $\Delta =\{p_1, \cdots , p_{\delta}\}\in{\text Sym}^\delta(\PP^2),$
 the inequality 
$$~~\quad \delta=20-r \le {\rm min} \{e(e+3)/6,(e-1)(e-2)/2\} \text {~and~}  (e,\delta)\neq (6,9)$$
-- which is virtually the inequality (\ref{AC}) -- continues to hold for $e=9$ and $6\le r\le 20$.
 Then one invokes a result by Arbarello and Cornalba \cite[Theorem 3.2]{AC1} to see eventually that the non-singular model $C\subset S$ -- where $S:=S_{20-r}$ is the  blowing up of $\PP^2$ at $\Delta$ -- of a nodal plane curve $E$ of degree $e=g-r+1=9$ in the linear system 
 $$|C|=|\pi^*(eL)-\sum_{i=1}^\delta 2E_i|$$ is embedded into $\PP^r$ as a linearly normal curve of degree $d=g+r-3$ by the very ample linear 
system  
$$|H|=|\pi^*((e-4)L)-\sum_{i=1}^\delta E_i)|.$$
\noindent
To sum up, for $g=r+8$ and $r\ge 6$ we  located the following components, which {\bf may not be all} the components of $\h{H}^\h{L}_{g+r-3,g,r}$: cf. Remark \ref{triple}.
\vni
$\h{H}_{4,1}$ and $\h{H}_{\Sigma_{9,r+8}}$ \quad if $6\le r\le 12$.
\vni
$\h{H}_{4,1}$, $\widetilde{\h{H}}_{(2,3),1}$ and $\h{H}_{\Sigma_{9,r+8}}$ \quad if $13\le r\le 20$. 
\vni
$\h{H}_{4,1}$ and $\widetilde{\h{H}}_{(2,3),1}$ \quad if $r\ge 21$.\end{enumerate}
\end{proof}

\vni
Theorem \ref{reducible} together with Remark \ref{extremal} (ii) yield the following immediate corollary which identifies all the triples $(g+r-3.g.r)$ with $r\ge 5$ such that $\HL{g+r-3,g,r}$ is reducible.
\begin{cor} Let  $\Upsilon\subset \mathbb{N}^{\oplus 3}$ be same as in Theorem \ref{first} and  we set 
$$\widetilde{\Upsilon}=\{(g+r-3,g,r)| r+7\le g\le 2r+2, r\ge 5\}\cup \{ (12,10,5)\}\subset\Upsilon.$$
For $r\ge 5$, $$\HL{g+r-3,g,r} \textrm{ is reducible if and only if }  (g+r-3,g,r)\in\widetilde{\Upsilon}.$$ 
\end{cor}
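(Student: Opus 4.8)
The plan is to treat the corollary as a synthesis of the existence and (ir)reducibility results already established in this section, so that the whole argument collapses to a finite case analysis. The first step is to restrict attention to non-empty triples: by Theorem \ref{first}, for $r\ge 5$ the scheme $\HL{g+r-3,g,r}$ is non-empty exactly when $(g+r-3,g,r)\in\Upsilon$, i.e. when $g\ge r+5$ with $g\neq r+6$, or when $g=r+6$ with $5\le r\le 9$. Hence it suffices to decide reducibility on $\Upsilon$, and the claim becomes the assertion that the reducible locus inside $\Upsilon$ is precisely $\widetilde{\Upsilon}$.

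For the reducibility (``if'') direction I would observe that every element of $\widetilde{\Upsilon}$ falls into one of two groups. If $r+7\le g\le 2r+2$, then Theorem \ref{reducible} already produces at least two distinct components — the loci $\h{H}_{k,\delta}$ dominating $\h{M}^1_{g,k}$ for $4\le k\le[\frac{g-r+1}{2}]$, together with a Severi-type component when $e=g-r+1$ is small — so $\HL{g+r-3,g,r}$ is reducible. The only other element of $\widetilde{\Upsilon}$ is the isolated triple $(12,10,5)$, that is $g=r+5=10$ with $r=5$; here the scheme parametrizes extremal curves and Remark \ref{extremal}(ii) exhibits its two components. Thus reducibility holds on all of $\widetilde{\Upsilon}$.

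For the converse I would exhaust the complement $\Upsilon\setminus\widetilde{\Upsilon}$, which splits into exactly three subfamilies, and cite one irreducibility statement for each: the extremal range $g=r+5$ with $r\ge 6$ is irreducible by Remark \ref{extremal}(i); the single admissible genus $g=r+6$ with $5\le r\le 9$ is irreducible by Proposition \ref{empty}(ii); and the range $g\ge 2r+3$ is irreducible by Theorem \ref{JPA}(4) (equivalently \cite[Theorem 2.5]{JPAA}). Since for each fixed $r\ge 5$ the non-empty genera are exactly $g=r+5$, the permitted values $g=r+6$, and every $g\ge r+7$, these three subfamilies together with $\widetilde{\Upsilon}$ partition $\Upsilon$, so no case is omitted.

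The step demanding the most care is not geometric but combinatorial, namely the bookkeeping at the two boundaries. One must verify that the reducible range $r+7\le g\le 2r+2$ is non-empty for every $r\ge 5$ (for $r=5$ it forces $g=2r+2=12$ alone) and abuts cleanly against the irreducible range $g\ge 2r+3$, and that the low genera $g=r+5,r+6$ are correctly separated from the $g\ge r+7$ regime — in particular that the lone reducible extremal triple $(12,10,5)$ is exactly the $r=5$ instance of $g=r+5$ that Remark \ref{extremal}(i) excludes. Once this finite verification is carried out, the stated equivalence is immediate.
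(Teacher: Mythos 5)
Your proposal is correct and is essentially the paper's own argument: the paper presents the corollary as an immediate consequence of Theorem \ref{reducible} together with Remark \ref{extremal}(ii), with the complement $\Upsilon\setminus\widetilde{\Upsilon}$ handled exactly as you do by Remark \ref{extremal}(i), Proposition \ref{empty}(ii), and Theorem \ref{JPA}(4). Your explicit bookkeeping at the boundaries (e.g.\ that $r=5$ forces $g=12$ in the middle range, and that $(12,10,5)$ is precisely the extremal triple excluded from Remark \ref{extremal}(i)) is sound and matches the paper's intended case decomposition.
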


\begin{rmk}\label{triple} 
\begin{enumerate}
\item[(i)] When we were dealing with the case $g=r+8$ in the proof  of Theorem \ref{reducible} (iii), we ignored the possibility that $\h{E}=g^2_9$ may induce a 
triple covering $C\stackrel{\pi}{\rightarrow} E$ onto an elliptic curve $E$ and $\h{E}=\pi^*{(g^2_3)}$. Indeed, whenever $g=r+8\ge 16$, one may verify easily that the residual series of $\h{E}=\pi^*{(g^2_3)}$ is  very ample by using Castelnuovo-Severi inequality, etc.
However for  $6\le r\le 10$,  the family of linear series of the form $$
\h{S}:=\{ \h{E}=\pi^*{(g^2_3)}| ~C\stackrel{\pi}{\rightarrow} E, C\in\h{X}_{3,1})\}\subset \h{W}^2_9$$
has dimension
\begin{eqnarray*}
 \dim\h{X}_{3,1}&+&\dim W^2_3(E)=\dim\h{X}_{3,1}+\dim J(E)\\&=&\dim\h{X}_{3,1}+1
 \lneq \lambda(g+r-3,g,r)=4g-3r-6,
 \end{eqnarray*}
  where $E$ is an elliptic curve, hence the family $\h{S}^\vee$ is not a component of $\widetilde{\h{G}}_\h{L}$. When $r\ge 11$ at least one extra component $\h{H}_{(3,1)}$ corresponding to  triple coverings of elliptic curves arises.
 \item[(ii)] We remark that when $g=r+8$ and $r\ge 13$, $\widetilde{\h{H}}_{(2,3),1}\neq\h{H}_{(3,1)}$. Otherwise, ${\h{H}}_{(2,3),1}\subsetneq\h{H}_{(3,1)}$ by simple dimension counting. By Castelnuovo-Severi inequality,  a smooth double cover of a curve of genus three cannot be a  of triple cover of elliptic curve. Hence a smooth double cover of a curve of genus three is a limit of triple covers of elliptic curves which is impossible by the theory of admissible covers for a compactification of Hurwitz spaces of branched coverings.\footnote{The author has been informed by the referee that any smooth limit of $k_1$-fold covers of genus $h_1$ curves must be a $k_2$-fold
cover of a genus $h_2$ curve for some $k_2\leq k_1$ and $h_2 \leq h_1$; from the theory of admissible covers,  when branched covers degenerate one can keep the branch points separated and allow the domain/target curves to become nodal.  In this process the (arithmetic) genus remains the same, but the domain/target curves may become reducible whereas each irreducible component may have smaller genus and the degree of the cover restricted to a component may possibly become smaller;  cf. \cite[3G]{H3}  and \cite{HM}.}
  \item[(iii)] It is also worthwhile to mention that the component $\widetilde{\h{H}}_{(2,3),1}$ coincide with the irreducible locus ${\h{H}}_{(2,3),1}$. Note that given a very ample (and general) element $\h{D}\in\h{G}\subset\h{G}_\h{L}\subset\h{G}^r_{g+r-3}=\h{G}^r_{2r+5}$, the following four cases are all the possibilities for the residual series $\h{E}=\h{D}^\vee=g^2_9$: 
 
\begin{enumerate}
 \item[(a)]
$\h{E}$ is base-point-free and  birationally very ample; in this case,  $\h{G}^\vee =\h{G}'\subset\h{G}^2_9$ which the Severi variety $\Sigma_{9,r+8}$ as well as the component $\h{H}_{\Sigma_{9,r+8}}$ of expected dimension sits over.

 \item[(b)]
$\h{E}$ is base-point-free and compounded inducing a triple cover of an elliptic curve; in this case we have the component $\h{H}_{(3,1)}$.

 \item[(c)]
$\h{E}$ is not base-point-free and compounded inducing a double cover of a smooth plane quartic; the 
irreducible locus $\h{H}_{(2,3),1}$ corresponding to the family  $\h{F}_{(2,3),1}\subset\h{G}$ (introduced in the proof of Theorem \ref{reducible})  arises.

 \item[(d)]
$\h{E}$ is not base-point-free and compounded inducing a $4$-sheeted cover of a rational curve;   the corresponding component is $\h{H}_{4,1}$ dominating $\h{M}^1_{g,4}$. 
\end{enumerate}

\vni Since $\widetilde{\h{H}}_{(2,3),1}\neq\h{H}_{(3,1)}$, the locus $\h{F}_{(2,3),1}$ is indeed dense in $\h{G}$ and therefore $\widetilde{\h{H}}_{(2,3),1}={\h{H}}_{(2,3),1}$.

\vni
\item[(iv)] If $g=2r+2$, $r$ is odd and $r\ge 7$ in Theorem \ref{reducible}, by taking $k=\frac{r+3}{2}$, $\delta=g-r+1-2k=0$ we have
$$\dim \pi (\h{H}_{k,0})=\dim\h{M}^1_{g,k}=2g+2k-5=\lambda(g+r-3,g,r), $$  where $\h{H}_{k,0}\stackrel{\pi}{\dasharrow}\h{M}_g$ is the functorial map. For every other integer $k$ with $4\le k\lneqq\frac{r+3}{2}$, the component  $\h{H}_{k,\delta}$ has less than the expected number of moduli even though all the components $\h{H}_{k,\delta}$'s  have the same expected dimension. 
\item[(v)] If $g=2r+2$, $r$ is even and $r\ge 8$, every component $\h{H}_{k,\delta}$ has less than the expected number of moduli.
\end{enumerate}
\end{rmk}

\vni For the convenience of readers we add the following table which is a summary of rather complicated situation for the case $g=r+8$ in Theorem \ref{reducible} by taking our discussion in Remark \ref{triple} into account.
\vfill
\pagebreak
\begin{table}[ht]
\caption{Components of $\h{H}^\h{L}_{g+r-3,g,r}$ for $g=r+8$} 
\centering 
\begin{tabular}{c c c c c c} 
\hline\hline 
{\footnotesize Dimension}& {\footnotesize\# of }  & {\footnotesize\# of compts} & {\footnotesize \# of compts} & {\footnotesize \# of compts} & {\footnotesize \# of compts}\\ [0.5ex] 
{\footnotesize of $\PP^r$}  & {\footnotesize compts} & {\footnotesize of expected} & {\footnotesize with expected} & {\footnotesize $\lneqq$ expected} &{\footnotesize $\gneqq$ expected}\\ [0.5ex] 
{}  & {} & {\footnotesize dimension} & {\footnotesize \# of moduli} & {\footnotesize \# of moduli} &  {\footnotesize \# of moduli}\\ [0.5ex] 
\hline 
\hline 
{\footnotesize $r=6$} & {\footnotesize 2} & {\footnotesize 2} & {\footnotesize 1} & {\footnotesize 1} & {\footnotesize 0}\\
{\footnotesize $r=7$} & {\footnotesize 2} & {\footnotesize 1} & {\footnotesize 2} & {\footnotesize 0}  &{\footnotesize 0}\\ 
{\footnotesize $8\le r \le 10$} & {\footnotesize 2}& {\footnotesize 1} & {\footnotesize 1} & {\footnotesize 0} & {\footnotesize 1} \\
{\footnotesize $r=11$} & {\footnotesize $\ge 3$}& {\footnotesize $\ge 2$} & {\footnotesize 1} & {\footnotesize $\ge 1$} & {\footnotesize 1} \\
{\footnotesize $r=12$} & {\footnotesize $\ge 3$}& {\footnotesize 1} & {\footnotesize $\ge 2$} & {\footnotesize 0} & {\footnotesize 1} \\
{\footnotesize $r=13$} & {\footnotesize $\ge 4$}& {\footnotesize $\ge 2$} & {\footnotesize $1$}& {\footnotesize $\ge 1$} & {\footnotesize $\ge 2$} \\ 
{\footnotesize $r=14$} &{\footnotesize $\ge 4$} & {\footnotesize $1$} & {\footnotesize $\ge 2$} &{\footnotesize $0$} & {\footnotesize $\ge 2$}\\
{\footnotesize $15\le r\le 20$} & {\footnotesize $\ge 4$}  & {\footnotesize $1$} & {\footnotesize $1$} & {\footnotesize $ 0$}  & {\footnotesize $\ge 3$}  \\
{\footnotesize $r\ge 21$}& {\footnotesize $\ge 3$}  & {\footnotesize $0$} & {\footnotesize $0$} & {\footnotesize $0$} &{\footnotesize $\ge 3$} \\ [1ex] 
\hline 
\end{tabular}
\label{table:nonlin} 
\end{table}

\bibliographystyle{spmpsci} 

\end{document}